\newtheorem{counterSections}{}[section]
\newtheorem{theorem}[counterSections]{Theorem}
\newtheorem{lemma}[counterSections]{Lemma}
\newtheorem{claim}[counterSections]{Claim}
\newtheorem{remark}{Remark}
\newcommand{\N}{\mathbb{N}}
\newcommand{\R}{\mathbb{R}}
\newcommand{\E}{\mathbb{E}}
\newcommand{\Prob}{\mathbb{P}}
\newcommand{\V}{\mathbb{V}}
\newcommand{\Ind}{\mathbbm{1}}
\newcommand{\cond}{\; \middle\vert \;}
\newcommand{\event}[1]{\mathcal{A}_{#1}}
\title{The phase transition in multi-type binomial random graphs}
\author{Mihyun Kang\footnotemark[1]\ \footnotemark[2]  \and Christoph Koch\footnotemark[1]\ \footnotemark[3] \and Ang\'elica Pach\'on\footnotemark[4]\ \footnotemark[6]}
\date{}
\begin{document}

\maketitle
\footnotetext{First Published in SIAM J. DISCRETE MATH. in Vol.~29, No.~2, pp.~1042–1064, published by the Society for Industrial and Applied Mathematics (SIAM). \copyright\ 2015 Society for Industrial and Applied Mathematics.}
\footnotetext{Mathematical Subject Classifications: 05C80, 60J80.}
\footnotetext[1]{Institute of Optimization and Discrete Mathematics, Graz University of Technology, Steyrergasse 30, 8010 Graz, Austria. E-mail: \{kang,ckoch\}@math.tugraz.at}
\footnotetext[4]{Department of Mathematics "Giuseppe Peano", University of Turin, Via Carlo Alberto 10, 10123 Turin, Italy. E-mail: angelicayohana.pachonpinzon@unito.it}
\footnotetext[2]{Supported by the Austrian Science Fund (FWF): P26826 and W1230, and the German Research Foundation (DFG): KA 2748/3-1.}
\footnotetext[3]{Supported by the Austrian Science Fund (FWF): P26826 and W1230, and NAWI-Graz. }
\footnotetext[6]{Supported by the German Research Foundation (DFG): KA 2748/3-1. }

\begin{abstract}
We determine the asymptotic size of the largest component in the $2$-type binomial random graph $G(\mathbf{n},P)$ near criticality using a refined branching process approach. In $G(\mathbf{n},P)$ every vertex has one of two types, the vector $\mathbf{n}$ describes the number of vertices of each type, and any edge $\{u,v\}$ is present independently with a probability that is given by an entry of the probability matrix $P$ according to the types of $u$ and $v.$

We prove that in the \emph{weakly} supercritical regime, i.e.\ if the \enquote{distance} to the critical point of the phase transition is given by an $\varepsilon=\varepsilon(\mathbf{n})\to0,$ with probability $1-o(1),$ the largest component in $G(\mathbf{n},P)$ contains asymptotically $2\varepsilon \|\mathbf{n}\|_1$ vertices and all other components are of size $o(\varepsilon \|\mathbf{n}\|_1).$  
\end{abstract}

\section{Introduction}

The theory of random graphs was founded by Erd\H{o}s and R\'enyi in the late 1950s. One of their most striking results concerned the \emph{phase transition} of the size of the largest component -- adding a few additional edges to a random graph can drastically alter the size of its largest component. In~\cite{ErdosRenyi60} they considered the random graph $G(n,m)$ obtained by choosing a graph uniformly at random amongst all graphs on $n$ (labelled) vertices containing precisely $m$ edges and proved the following result: Let $c\ge 0$ be any constant. If $c<1$, then \emph{with high probability} (\emph{whp} for short, meaning with probability tending to one as $n\to\infty$)  all components in $G(n,cn/2)$ have size $O(\log n)$, while if $c=1$, \emph{whp} the largest component is of size $\Theta(n^{2/3})$, and if $c > 1$, then \emph{whp} there is a component of size $\Theta(n),$ called  the \enquote{giant component}, and all other components are of size $O(\log n)$.

Bollob\'as~\cite{Bollobas84} investigated this phenomenon further and described in detail the behaviour of $G(n,m)$ when $m$ is close to $n/2$, i.e. $m=(1\pm\varepsilon)n/2$ for some $\varepsilon=\varepsilon(n)>0$ satisfying $\varepsilon \rightarrow 0$ as $n\to\infty$. His initial results were then improved by {\L}uczak~\cite{Luczak90}. In particular, if in addition $\varepsilon^3 n\to\infty$, \emph{whp} the largest component in $G(n,(1-\varepsilon)n/2)$ has size $o(n^{2/3})$, whereas the largest component in $G(n,(1+\varepsilon)n/2)$ contains asymptotically $2\varepsilon n$ vertices and all other components are of size $o(\varepsilon n)$. For a comprehensive account of the results see~\cite{AlonSpencerBook,BollobasBook,JansonLuczakRucinskiBook}. 

In the meantime many of these results have been reproved and strengthened using various modern techniques such as martingales~\cite{NachmiasPeres10a}, partial differential equations~\cite{SpencerWormald07}, and search algorithms~\cite{BollobasJansonRiordan07,KrivelevichSudakov13}. Furthermore, more complicated discrete structures like random hypergraphs have been studied~\cite{BehrischCojaOglanKang10, BollobasRiordan12c, KaronskiLuczak02}. 

Over the last years, random graphs have proved to have wide-ranging applications in neurobiology, statistical physics, and the modelling of complex networks~\cite{NewmanBook,Strogatz01}. Frequently some properties of real-world networks are already empirically \enquote{known} and have motivated the definition of more sophisticated random graph models~\cite{ChungLu02,ChungLu06a,Soderberg02}. In particular, applicable random graph models should allow for different types of vertices having different degree distributions, i.e.\ some level of \emph{inhomogeneity}.
 A general theory of inhomogeneous random graphs was developed by Bollob\'as, Janson, and Riordan~\cite{BollobasJansonRiordan07} providing a unified framework for a large number of previously studied random graph models~\cite{BollobasJansonRiordan05,BrittonDeijfenMartinLof06,NorrosReittu06}. For example they analysed the degree distribution, the number of paths and cycles, and the phase transition for the giant component.  The behaviour at the \emph{critical point} (corresponding to $G(n,cn/2)$ for $c=1$) has been studied by van der Hofstad in the so-called \emph{rank one} case~\cite{VanDerHofstad12}. Recently Bhamidi, Broutin, Sen, and Wang studied the general inhomogeneous random graph with a bounded number of types inside the \emph{critical window} (corresponding to $G(n,(1\pm\varepsilon)n/2)$ for some $\varepsilon=\varepsilon(n)>0$ satisfying $\varepsilon^3n\to C$,  $0\le C<\infty$) and have described the joint distribution of the largest components using Brownian motion~\cite{BhamidiBroutinSenWang15}.

In this paper, we study an inhomogeneous random graph model in which there are $n$ vertices, each vertex has one of two types, and an edge between a pair of vertices of types $i$ and $j$ is present with probability $p_{i,j}$ independently of all other pairs. The focus lies on the \emph{weakly} supercritical regime, i.e.\ when the distance to the critical point of phase transition decreases to zero as $n\to\infty$. In this regime the behaviour of the random graph depends very sensitively on the parameters and could not be studied using the parametrisation in~\cite{BollobasJansonRiordan07}.  We determine the size of the largest component in this regime (Theorem 2.1).

In order to derive the main results, we apply a simple breadth-first search approach to construct a rooted spanning tree of a component and couple it with a multi-type branching process with binomial offspring distributions, which is viewed as a random rooted tree. In addition, the width and the dual of that random rooted tree play important roles in the  second moment analysis.

The results of this paper are indeed not surprising and the techniques used in the paper may look familiar. The main contribution of this paper is that it shows how a simple branching process approach combined with the concepts of tree width and dual processes can be  applied nicely to a multi-type random graph \emph{all the way through} the supercritical regime. 

\section{Model and main results}

In this section we will first define multi-type binomial random graphs and associate them with branching processes. Then we state the main results and afterwards provide an outline of the proof and the methods involved. We conclude the section by discussing related results on the general inhomogeneous random graph studied in~\cite{BollobasJansonRiordan07}.

\subsection{Multi-type binomial random graph model}

Let $k\in\N$ be fixed. Every vertex is associated with a \emph{type} $i\in\{1,\dots,k\}$  and we denote by $V_i$ the set of all vertices of type $i\in\{1,\dots,k\}$. Given an arbitrary vector $\mathbf{n}=(n_1,\dots,n_k)\in\N^k$ and a symmetric matrix of probabilities $P=(p_{i,j})_{i,j=1,\dots,k}\in [0,1]^{k\times k}$ we consider the \emph{$k$-type binomial random graph} $G_k(\mathbf{n},P)$ on $n_l$ vertices of type $l$, for $l\in\{1,\dots.k\}$, with the following edge set: For each pair $\{u,v\}$, where $u$ is of type $i$ and $v$ of type $j$, we include the edge $\{u,v\}$ independently of any other pair with probability $p_{i,j}$ and exclude it with probability $1-p_{i,j}$. We  write $M=(\mu_{i,j})_{i,j\in\{1,2\}}$ for the matrix of the expected number of neighbours $\mu_{i,j}=p_{i,j}n_j$ of type $j\in\{1,\dots,k\}$ for a vertex of type $i\in\{1,\dots,k\}$.

 Next we associate a binomial branching process in which each individual has a type $i\in \{1,\dots,k\}$ with the random graph $G_k(\mathbf{n},P).$  Fix a time $t\in\N_0$ and let $I_t$ be a set of individuals (i.e. the population) at time $t,$ which we also call the $t$-th generation of individuals. Then, with each individual $v\in I_t$ of type $j'\in \{1,\dots, k\}$, we associate a random vector $\mathbf{X}^v=(X_{1}^v,\dots,X_{k}^v)$, where for each $j\in\{1,\dots,k\}$ the random variable  $X_{j}^v$ is independent and binomially distributed with parameters $n_{j}$ and $p_{j',j}$ and thus with mean $\mu_{j',j}$. Then the population $I_{t+1}$ at time $t+1$ will be a set containing exactly $\sum_{v\in I_t}{X^v_{j}}$ \emph{new} individuals of type $j$, for each $j\in\{1,\dots,k\}$. In other words, the random variable $X_{j}^v$ represents the number of children of type $j$ that are born from the individual $v$. A $k$-type binomial branching process starting with an initial population $I_0$ is a sequence of random vectors $(Z_{t}(1),\dots,Z_{t}(k))_{t\in\N_0}$ generated by iterating the construction described above, where $Z_{t}(j)$ is the random variable describing the number of individuals of type $j$ in the $t$-th generation for each $j\in\{1,\dots,k\}$ and $t\in \N_0$. For $i\in \{1,\dots,k\}$ we denote by $\mathcal{T}_{\mathbf{n},P}^i$ a $k$-type binomial branching process starting with a single vertex of type $i$. We may also use $\mathcal{T}_{\mathbf{n},P}^i$ to denote the rooted (possibly infinite) tree created by an instance of the branching process. The context will always clarify the notation. Furthermore, if a statement is independent of the starting type we simply write $\mathcal{T}_{\mathbf{n},P}$\,, for instance, we refer to the matrix $M$ as \emph{offspring expectation matrix}  of the branching process $\mathcal{T}_{\mathbf{n},P}$\,.

Observe that for $k=1$ we obtain the classical binomial random graph $G(n,p)$ where $n=n_1$ and $p=p_{1,1}$ and the corresponding binomial branching process. 

Throughout the paper we focus on the case $k=2$ and for simplicity we write $G(\mathbf{n},P):=G_2(\mathbf{n},P)$. We denote by $n=n_1+n_2$ the total number of vertices in $G(\mathbf{n},P)$ and without loss of generality we assume that $n_1\geq n_2$. Furthermore, unless specified explicitly, all asymptotic statements are to be understood in terms of $n_1$ and $n_2$ being \emph{large enough} yet fixed and we use the notation $\min\{n_1,n_2\}=n_2\to\infty$ for this. Note that in general $\eta_{1,2}\neq\eta_{2,1}$ and  it is possible that $\eta_{2,1}/\eta_{1,2}\to\infty$, even though $p_{1,2}=p_{2,1}.$

Given a graph $\mathcal{G}$ with components $C_1,\dots,C_r$ ordered by size such that $|C_1|\geq |C_2|\geq\dots\geq |C_r|$ we denote by $\mathcal{L}_i(\mathcal{G})=C_i$ the $i$-th largest component of $\mathcal{G}$ and its size by $L_i(\mathcal{G})=|\mathcal{L}_i(\mathcal{G})|=|C_i|$,  for any $i\in\{1,\dots,r\}$,  and set $\mathcal{L}_i(\mathcal{G})=\emptyset$ and $L_i(\mathcal{G})=0$ if $i>r.$
Moreover, we will use the following standard notation to describe asymptotic statements: For any real functions $f=f(n_1,n_2)$ and $g=g(n_1,n_2)$ we write: $f=O(g)$ if $\exists c>0,n_0$ such that $|f(n_1,n_2)|\leq c|g(n_1,n_2)|$ for all $n_1\geq n_2\geq n_0$; $f=o(g)$ if $\forall c>0: \exists n_0$ such that $|f(n_1,n_2)|\leq c|g(n_1,n_2)|$ for all $n_1\geq n_2\geq n_0$; $f=\Omega(g)$ if $g=O(f)$; $f=\Theta(g)$ if $f=O(g)$ and $f=\Omega(g)$ and $f\sim g$ if $f-g=o(g)$.

\subsection{Main results}\label{sectionMainResult}

We show that $G(\mathbf{n},P)$ exhibits a phase transition in the size of the largest component. In particular,  we show that  in the \emph{weakly} supercritical regime there is a unique largest component containing asymptotically $2\varepsilon n$ vertices.  In fact, we prove a stronger result.

\begin{theorem}\label{mainresult}
For $n_1\in\N$ and $n_2\in \N$ with $n_1\geq n_2$, let $n=n_1+n_2$ and let $\varepsilon=\varepsilon(n_1,n_2)>0$ with $\varepsilon=o(1).$ Furthermore, let $$P=\left(p_{i,j}\right)_{i,j\in \{1,2\}}\in(0,1]^{2\times 2}$$ be a symmetric matrix of probabilities satisfying the following conditions:
 \begin{equation}\label{asymptoticalCondition}
\varepsilon^3n_2 \min\{1,\varepsilon^{-1}\mu_{2,1}\}\to \infty,
\end{equation}
\begin{equation}
\mu_{\iota,1}+\mu_{\iota,2}=1+\varepsilon+o(\varepsilon), \text{ for any }\iota\in\{1,2\}, \label{sumExpectation}
\end{equation}
where $\mu_{i,j}=p_{i,j}n_j$ for every pair $(i,j)\in \{1,2\}^2$.
Then, whp the following holds for all integers $r\ge 2$ and $i\in\{1,2\}\!:$  
$$
|\mathcal{L}_1\left(G(\mathbf{n},P)\right)\cap V_i|=(2+o(1))\varepsilon n_i \quad\text{ and } \quad|\mathcal{L}_r\left(G(\mathbf{n},P)\right)\cap V_i|=o(\varepsilon n_i); 
$$
therefore, in particular, 
\[L_1\left(G(\mathbf{n},P)\right)=(2+o(1))\varepsilon n\quad\text{ and } \quad L_r\left(G(\mathbf{n},P)\right)=o(\varepsilon n).\] 
\end{theorem}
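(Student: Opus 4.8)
The plan is to analyse the components of $G(\mathbf{n},P)$ through a breadth-first exploration, one vertex at a time, coupled with the two-type binomial branching process. When the vertex currently explored has type $i'$ and $s_j$ vertices of type $j$ have already been revealed, its fresh neighbours of type $j$ number, conditionally, $\mathrm{Bin}(n_j-s_j,p_{i',j})$; so, starting at a vertex of type $i$ and writing $C(v)$ for the component of $v$, the exploration is stochastically sandwiched between $\mathcal{T}^{i}_{\mathbf{n},P}$ and a truncated process with offspring $\mathrm{Bin}(n_j-k,p_{i',j})$, the latter coupling being exact until $k$ vertices have been found. The offspring expectation matrices of both envelopes have \emph{constant row sums}, equal to $1+\varepsilon+o(\varepsilon)$ (for the truncated one this needs $k=o(\varepsilon n_2)$); since $M$ is irreducible because $P\in(0,1]^{2\times2}$, Perron--Frobenius identifies their dominant eigenvalue with that common row sum, the right eigenvector being close to $(1,1)$, so both are weakly supercritical with the \emph{same} parameter $\varepsilon(1+o(1))$. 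Two features of the random tree $\mathcal{T}_{\mathbf{n},P}$ drive the rest: its \emph{width} --- the maximal generation size --- which controls the rate at which vertices are used up, hence the coupling error once the exploration grows past $\Theta(\sqrt n)$ vertices; and its \emph{dual}, the process conditioned on extinction, which is a subcritical two-type process with dominant eigenvalue $1-\varepsilon+o(\varepsilon)$ and governs the components that stay small. This follows the spirit of the classical single-type exploration proofs~\cite{Luczak90} adapted to two types.

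For the first moment, fix a threshold $k^{\ast}$ with $\varepsilon^{-2}\log(\varepsilon^{3}n_2)\ll k^{\ast}=o(\varepsilon n_2)$; condition~\eqref{asymptoticalCondition} is exactly what makes such a choice possible --- in the regime $\mu_{2,1}\ge\varepsilon$ it reads $\varepsilon^{3}n_2\to\infty$, while in the regime $\mu_{2,1}<\varepsilon$ it reads $\varepsilon^{2}\mu_{2,1}n_2\to\infty$, which is the expected number of type-$1$/type-$2$ edges landing inside the would-be giants of the two nearly decoupled diagonal blocks. Call $v$ \emph{large} if $|C(v)|\ge k^{\ast}$. From the sandwiching, for $v\in V_i$ the probability that $v$ is large lies between the probabilities that the two envelope processes reach total progeny $k^{\ast}$, and each of these equals the corresponding survival probability up to an error $o(\varepsilon)$ (bound the chance of a process that dies yet reaches progeny $k^{\ast}$ via the dual). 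The crucial computation is that the survival probabilities $\rho_1,\rho_2$ of $\mathcal{T}_{\mathbf{n},P}$ are both $(2+o(1))\varepsilon$: expanding $1-\rho_i=\prod_{j}(1-p_{i,j}\rho_j)^{n_j}$ and using $p_{i,j}\to0$ yields $\rho_i-\sum_{j}\mu_{i,j}\rho_j=-\tfrac12\rho_i^{2}+o(\varepsilon^{2})$ for $i\in\{1,2\}$; since the row sums of $M$ are $1+\varepsilon+o(\varepsilon)$, the vector $(\rho_1,\rho_2)$ is, to leading order, a multiple of $(1,1)$, and matching the $\varepsilon^{2}$-terms forces that multiple to be $2$. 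Condition~\eqref{sumExpectation} is precisely what makes both types critical with the \emph{same} $\varepsilon$, even in the decoupled limit, where $\rho_i=2(\mu_{i,i}-1)(1+o(1))=(2+o(1))\varepsilon$. Hence $\E\bigl|\{v\in V_i:v\text{ large}\}\bigr|=(2+o(1))\varepsilon n_i$.

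For concentration of $Z_i:=\bigl|\{v\in V_i:v\text{ large}\}\bigr|$ we would estimate $\E[Z_i^{2}]=\sum_{(u,v)}\Prob[u,v\text{ both large}]$, separating the diagonal (contributing $\E[Z_i]$), pairs in a common component (contributing $\E\bigl[\sum_{C:\,|C|\ge k^{\ast}}|C\cap V_i|^{2}\bigr]$), and pairs in distinct large components. The guiding observation, which also essentially delivers uniqueness, is that deleting a component of size $\ge(1+\delta)\varepsilon n$ typically strips a $\Theta(\varepsilon)$-fraction of the vertices of each type, rendering the remaining graph subcritical, so conditionally a second large component is then exponentially unlikely in $k^{\ast}$; this controls the distinct-components term once one knows that whp $C(u)$ is \emph{typically} either small or of size $\gtrsim\varepsilon n$, i.e.\ that no component has size in $\bigl(k^{\ast},(2-o(1))\varepsilon n\bigr)$. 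This \enquote{no intermediate component} statement, together with the a priori bound $L_1=O(\varepsilon n)$, comes from a large-deviation estimate for the exploration walk, whose drift is $\approx\varepsilon-s/n$ and hence turns negative well before $s=(2+o(1))\varepsilon n$; this is where the bulk of~\eqref{asymptoticalCondition} is spent, and where the width enters to control the deviation of the exploration from the idealised branching process for $s\gg\sqrt n$. Combined with the first moment this gives $Z_i=(2+o(1))\varepsilon n_i$ whp. Separately, the dual process (subcritical, parameter $\varepsilon$) shows the largest component that is not large has size $o(\varepsilon n)$, so $L_r=o(\varepsilon n)$ for every $r\ge2$.

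Finally, to see that all large vertices lie in a single component we would sprinkle: write each $p_{i,j}=p'_{i,j}+p''_{i,j}-p'_{i,j}p''_{i,j}$ with $P'$ carrying parameter $\tfrac12\varepsilon$, so $G(\mathbf{n},P)=G(\mathbf{n},P')\cup G(\mathbf{n},P'')$. In $G(\mathbf{n},P')$ the analysis above produces a family of large components whose union meets $V_i$ in $(1+o(1))\tfrac12\varepsilon n_i$ vertices and each of which, by one further first-moment estimate, contains a $\Theta(n_i/n)$-fraction of type-$i$ vertices; revealing $G(\mathbf{n},P'')$ then adds, between any two of them, an edge with probability at least a constant times the product of their sizes divided by $n$, and a routine connectivity argument makes the family coalesce whp --- the supply of cross-edges performing this merger is precisely what the factor $\min\{1,\varepsilon^{-1}\mu_{2,1}\}$ in~\eqref{asymptoticalCondition} accounts for. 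Then $\mathcal{L}_1$ absorbs every large vertex, so $|\mathcal{L}_1(G(\mathbf{n},P))\cap V_i|=Z_i=(2+o(1))\varepsilon n_i$ while $|\mathcal{L}_r(G(\mathbf{n},P))\cap V_i|<k^{\ast}=o(\varepsilon n_i)$ for $r\ge2$; summing over $i$ gives the statements for $L_1$ and $L_r$. I expect the main obstacle to be making the concentration and uniqueness arguments work \emph{uniformly} across the whole parameter range, in particular in the strongly asymmetric, weakly coupled regime $\mu_{2,1}\ll\mu_{1,2}$ with $\mu_{2,1}$ possibly $o(\varepsilon)$ --- which is allowed, since $\mu_{2,1}/\mu_{1,2}=n_1/n_2$ may diverge: there the coupling with a single branching process is wasteful, the two types must be tracked almost independently, and every $o(1)$ error, above all in the row sums of~\eqref{sumExpectation}, must be weighed against the slender budget $\varepsilon^{3}n_2\min\{1,\varepsilon^{-1}\mu_{2,1}\}$ of~\eqref{asymptoticalCondition}, which is exactly the room available and no more.
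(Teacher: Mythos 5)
Your overall architecture --- couple the breadth-first exploration with a two-type binomial branching process from above and below, compute the survival probabilities $\rho_1\sim\rho_2\sim 2\varepsilon$, use the subcritical dual to bound the probability that a dying process reaches a size threshold, use the width to control boundary effects, and finish by sprinkling --- is the same as the paper's. The computation of $\rho_i$ is also essentially the paper's (the paper squeezes $\rho_i$ between zeros of $f_i(\rho)=1-\rho-e^{-(1+\varepsilon_i)\rho}$; your Taylor-expansion of $\prod_j(1-p_{i,j}\rho_j)^{n_j}$ is the same computation in different clothes, though the step ``$(\rho_1,\rho_2)$ is to leading order a multiple of $(1,1)$'' would need the monotonicity device the paper uses to make it airtight).

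Where you diverge is in the concentration and uniqueness steps, and this is where the gaps are. For the second moment you propose the classical {\L}uczak decomposition of $\E[Z_i^2]$ into diagonal, same-component, and distinct-component pairs, with the distinct-component term controlled via ``after deleting one large component the rest is subcritical.'' To invoke that you need a ``no intermediate component'' lemma --- no component of size in $(k^{\ast},(2-o(1))\varepsilon n)$ --- which you never prove. Proving it for this model is not routine: you have to show that the exploration walk reveals vertices of \emph{each type} roughly in proportion, so that the per-type drift turns negative at the same time; in the strongly asymmetric, weakly coupled regime ($\mu_{2,1}$ possibly $o(\varepsilon)$, $n_2/n\to 0$) this is exactly where the budget in~\eqref{asymptoticalCondition} is tight. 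The paper sidesteps the issue entirely: it writes $\E[s_{i,\mathbf{L}}^2]\le n_i\,\Prob(\mathcal{A})\,\E[s_{i,\mathbf{L}}\mid\mathcal{A}]$, where $\mathcal{A}$ is the event that the exploration of the first component stops early (because a size threshold $l_j$ or a width threshold $\varepsilon l_2$ is reached), and then bounds $\E[s_{i,\mathbf{L}}\mid\mathcal{A}]$ by exploring a second vertex's component in $G\setminus\mathcal{T}'_v$ and noting that it can only interact with the at most $2\varepsilon l_2$ boundary vertices. The width lemma (probability of large width \emph{and} extinction is $o(\varepsilon)$) is used precisely here; you instead earmark it for a deviation bound on the exploration walk, which is a different and heavier use.

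The sprinkling step also has a genuine gap. You split $P=P'\cup P''$ with $P'$ carrying parameter $\tfrac12\varepsilon$, so $G(\mathbf{n},P')$ only has $(1+o(1))\varepsilon n$ vertices in large components, and after merging them via $P''$ you get a component of size at least $(1+o(1))\tfrac12\varepsilon n$ --- far short of the target $(2+o(1))\varepsilon n$. Bootstrapping the size back up again requires the unproved ``no intermediate component'' claim, this time applied to the full $G(\mathbf{n},P)$, and even then it only gives the total size; the per-type counts $|\mathcal{L}_1\cap V_i|=(2+o(1))\varepsilon n_i$ need a separate per-type version of the dichotomy which you do not address. The paper instead perturbs by an $o(\varepsilon/n_1)$ amount ($p^b_{1,2}=\alpha\varepsilon/(n_1\log\omega)$), so that $P^a$ satisfies the \emph{same} hypotheses with the \emph{same} $\varepsilon$; Lemma~\ref{Concentration} then applies to \emph{both} $G(\mathbf{n},P^a)$ and $G(\mathbf{n},P)$, giving $S^a_{i,\mathbf{L}}\subset\mathcal{C}^\ast\cap V_i\subset S_{i,\mathbf{L}}$ with both ends $=(2+o(1))\varepsilon n_i$. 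The observation that the merged component already has the right per-type count is therefore immediate and requires no dichotomy, and the second-largest bound falls out of $|S_{i,\mathbf{L}}|-|S^a_{i,\mathbf{L}}|=o(\varepsilon n_i)$. In short: the sprinkle must be $o(\varepsilon)$-small, not $\Theta(\varepsilon)$-small, for the rest of the argument to close cleanly.
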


\begin{remark}
Observe that, up to the term $\min\{1,\varepsilon^{-1}\mu_{2,1}\}$, Condition~\eqref{asymptoticalCondition} mirrors the condition $\varepsilon^3 n\to \infty$ that is necessary and sufficient for the existence of a unique largest component in $G(n,(1+\varepsilon)/n)$. In $G(n,(1+\varepsilon)/n)$ the average degree is $1+\varepsilon=\Theta(1)$ and therefore it does not influence the asymptotic statement in~\eqref{asymptoticalCondition}. In $G(\mathbf{n},P)$ however, $P$ can be such that we are close to criticality but the average number $\mu_{2,1}$ (respectively $\mu_{1,2}$) of neighbours of the opposite type for a given vertex is still $o(1)$. Roughly speaking, it is reasonable that if $\mu_{2,1}$ is \enquote{very small}, then the random graph $G(\mathbf{n},P)$ may have two largest components, one of each type, that coexist independently since the probability of adding any edge between them is negligible. In particular, this would happen in case probability $p_{1,2}$ was equal to zero and therefore $\mu_{1,2}=\mu_{2,1}=0.$

On the other hand, in the special case $n_2=o(n),$ the condition in~\eqref{asymptoticalCondition} differs by an additional factor of $n_2/n=o(1)$ from that in $G(n,(1+\varepsilon)/n)$. This factor is, for instance, necessary to show that the number of vertices of type $2$ in the largest components is concentrated around its mean. Therefore it is not avoidable with this method, even though it might not be optimal.
\end{remark}

\begin{remark}
The symmetry of $P$ simply reflects the fact that $G(\mathbf{n},P)$ is an \emph{undirected} random graph.
\end{remark}

Note that the parameter $\varepsilon>0$ describes the \emph{distance} to the critical point for the emergence of the giant component in a sense that we will explain now. 
Roughly speaking, for some time, the breadth-first exploration process of a component in $G(\mathbf{n},P)$ looks like a $2$-type binomial branching process $\mathcal{T}_{\mathbf{n},P}$\,. This can be described by a coupling of the two processes. If the branching process dies out its total population should be rather \enquote{small}. Thus, by the coupling, the explored component is also \enquote{small}. It is well-known that for a $2$-type binomial branching process the property of survival has a threshold and that the critical point is characterised by the Perron-Frobenius eigenvalue 
\begin{equation}\label{lambdaEx}
\lambda=\frac{\mu_{1,1}+\mu_{2,2}}{2}+\frac{1}{2}\sqrt{(\mu_{1,1}+\mu_{2,2})^2+4\left(\mu_{1,2}\mu_{2,1}-\mu_{1,1}\mu_{2,2}\right)}
\end{equation} of its offspring expectation matrix $M=(\mu_{i,j})_{i,j\in\{1,2\}}$.  If $\lambda>1,$ the process has a positive probability of survival, while if $\lambda\leq 1,$ it dies out with probability $1$.

Next, let us compute $\lambda$ for the $2$-type binomial branching process $\mathcal{T}_{\mathbf{n},P}$ with parameters as in Theorem~\ref{mainresult}. Condition~\eqref{sumExpectation} states that for every constant $\delta\in(0,1)$ there is an $n_0=n_0\left(\delta\right)$ such that we have $$\left|\mu_{i,1}+\mu_{i,2}-(1+\varepsilon)\right|\leq \delta\varepsilon\, ,$$ for $i\in\{1,2\}$ and all $n_1\geq n_2\geq n_0.$ This implies $$\mu_{1,2}\mu_{2,1}-\mu_{1,1}\mu_{2,2}\leq \left(1+\varepsilon+\delta\varepsilon\right)^2-\left(\mu_{1,1}+\mu_{2,2}\right)\left(1+\varepsilon+\delta\varepsilon\right)$$
and similarly $$\mu_{1,2}\mu_{2,1}-\mu_{1,1}\mu_{2,2}\geq \left(1+\varepsilon-\delta\varepsilon\right)^2-\left(\mu_{1,1}+\mu_{2,2}\right)\left(1+\varepsilon-\delta\varepsilon\right).$$
 Therefore we can bound the argument of the square root in~\eqref{lambdaEx} from above by
$$(\mu_{1,1}+\mu_{2,2})^2+4\left(\mu_{1,2}\mu_{2,1}-\mu_{1,1}\mu_{2,2}\right)\leq \left(2\left(1+\varepsilon+\delta\varepsilon\right)-\left(\mu_{1,1}+\mu_{2,2}\right)\right)^2 $$ and from below by 
$$(\mu_{1,1}+\mu_{2,2})^2+4\left(\mu_{1,2}\mu_{2,1}-\mu_{1,1}\mu_{2,2}\right)\geq \left(2\left(1+\varepsilon-\delta\varepsilon\right)-\left(\mu_{1,1}+\mu_{2,2}\right)\right)^2\, . $$ 
Thus, by~\eqref{lambdaEx} and since $\delta$ was arbitrary, we obtain the following asymptotic estimate for the Perron-Frobenius eigenvalue
\begin{equation}\label{lambda}
\lambda=1+\varepsilon+o(\varepsilon).\end{equation}  
In other words, $\varepsilon$ describes how close $\lambda$ is to $1$.

Our next result concerns the weakly \emph{subcritical} regime.

\begin{theorem}\label{mainresult2}
For $n_1\in\N$ and $n_2\in \N$ with $n_1\geq n_2$, let $n=n_1+n_2$ and let $\varepsilon=\varepsilon(n_1,n_2)>0$ with $\varepsilon=o(1).$ Furthermore, let $$P=\left(p_{i,j}\right)_{i,j\in \{1,2\}}\in(0,1]^{2\times 2}$$ be a symmetric matrix of probabilities satisfying the following conditions:
 \begin{equation}\label{asymptoticalConditionSub}
\varepsilon^3n_2\to \infty,
\end{equation}
\begin{equation}
\mu_{\iota,1}+\mu_{\iota,2}=1-\varepsilon+o(\varepsilon), \text{ for any }\iota\in\{1,2\}, \label{sumExpectationSub}
\end{equation}
where $\mu_{i,j}=p_{i,j}n_j$ for every pair $(i,j)\in \{1,2\}^2$. Then we have whp
\begin{equation*}
L_1\left(G(\mathbf{n},P)\right)=o(n^{2/3}).
\end{equation*}
\end{theorem}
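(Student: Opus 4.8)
The plan is to reduce the statement, via the breadth-first coupling already used for the supercritical case, to a tail estimate for the total progeny of the subcritical $2$-type binomial branching process, and then to close the argument with a first-moment (union over components) bound. Fix a type $i\in\{1,2\}$ and a vertex $v\in V_i$, and explore $C(v)$ in breadth-first order: whenever a vertex $u$ of type $j'$ is processed, the number of not-yet-discovered neighbours of $u$ of type $j$ is stochastically dominated by $\mathrm{Bin}(n_j,p_{j',j})$, because some type-$j$ vertices have already been seen or queued. Revealing these choices one step at a time couples the exploration with $\mathcal{T}^i_{\mathbf{n},P}$ so that $L(C(v))\preceq|\mathcal{T}^i_{\mathbf{n},P}|$. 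Writing $N_{\ge k}$ for the number of vertices of $G(\mathbf{n},P)$ lying in components of size at least $k$, this gives $\E[N_{\ge k}]\le n_1\Prob(|\mathcal{T}^1_{\mathbf{n},P}|\ge k)+n_2\Prob(|\mathcal{T}^2_{\mathbf{n},P}|\ge k)$, and since $L_1(G(\mathbf{n},P))\ge k$ forces $N_{\ge k}\ge k$, Markov's inequality yields
\[
\Prob\bigl(L_1(G(\mathbf{n},P))\ge k\bigr)\ \le\ \frac1k\Bigl(n_1\Prob(|\mathcal{T}^1_{\mathbf{n},P}|\ge k)+n_2\Prob(|\mathcal{T}^2_{\mathbf{n},P}|\ge k)\Bigr).
\]

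The core is therefore a tail bound for $|\mathcal{T}^i_{\mathbf{n},P}|$, uniform in the starting type. By~\eqref{sumExpectationSub} every individual, whatever its type, has expected offspring $\mu_{\iota,1}+\mu_{\iota,2}=1-\varepsilon+o(\varepsilon)$, so the population-exploration walk $Y_0=1$, $Y_t=Y_{t-1}-1+D_t$, with $D_t$ the (conditionally binomial, hence well-concentrated) number of children of the $t$-th processed individual, has conditional drift $-\varepsilon+o(\varepsilon)$ and $|\mathcal{T}^i_{\mathbf{n},P}|=\inf\{t:Y_t=0\}$. I would prove an estimate of the shape
\[
\Prob\bigl(|\mathcal{T}^i_{\mathbf{n},P}|\ge k\bigr)\ \le\ \frac{C}{\varepsilon^2k^{3/2}}\exp\!\bigl(-c\varepsilon^2k\bigr)\qquad\text{for all }k\ge C'\varepsilon^{-2},
\]
with absolute constants $c,C,C'>0$. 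The exponential factor is the Cram\'er--Chernoff bound for the survival event $\{\sum_{t\le k}D_t\ge k\}$, which holds uniformly over the randomly evolving type sequence thanks to the iterated bound $\E\bigl[e^{\theta\sum_{t\le k}D_t}\bigr]\le\exp\bigl(k(1-\varepsilon/2)(e^{\theta}-1)\bigr)$; the polynomial factor $k^{-3/2}$ comes from an Otter--Dwass / cycle-lemma identity together with a local-limit estimate $\Prob(\sum_{t\le k}D_t=k-1)=O(k^{-1/2}\exp(-c\varepsilon^2k))$ for this negative-drift walk. The hypothesis $\varepsilon^3n_2\to\infty$ enters here to guarantee that $n_2$ comfortably exceeds the relevant window $k\asymp\varepsilon^{-2}\log(\varepsilon^3n_2)$, so that the type-$2$ exploration is genuinely binomial-like over that window; much of this analysis is in any case needed for the ``all other components are small'' part of Theorem~\ref{mainresult}.

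Finally I would substitute $k=k_0:=B\varepsilon^{-2}\log(\varepsilon^3n_1)$ for a sufficiently large constant $B$. Using $n_2\le n_1\le n\le 2n_1$ and $\varepsilon^3n_1\ge\varepsilon^3n_2\to\infty$, the two displays give $\Prob(L_1(G(\mathbf{n},P))\ge k_0)\le 2C(\varepsilon^3n_1)^{1-cB}B^{-5/2}\bigl(\log(\varepsilon^3n_1)\bigr)^{-5/2}\to0$, so whp $L_1(G(\mathbf{n},P))<k_0$. On the other hand, writing $\omega:=\varepsilon^3n_1\to\infty$, an elementary computation gives $k_0/n^{2/3}=\Theta\bigl(\omega^{-2/3}\log\omega\bigr)\to0$, hence $k_0=o(n^{2/3})$. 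Therefore for every fixed $\delta>0$ we have $\delta n^{2/3}>k_0$ eventually, whence $\Prob(L_1\ge\delta n^{2/3})\le\Prob(L_1\ge k_0)\to0$, which is exactly $L_1(G(\mathbf{n},P))=o(n^{2/3})$ whp.

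The main obstacle is the branching-process tail bound with the correct prefactor. The crude exponential bound $\Prob(|\mathcal{T}^i_{\mathbf{n},P}|\ge k)\le\exp(-c\varepsilon^2k)$ is immediate from the moment-generating-function estimate above, but it only yields $L_1=o(n^{2/3})$ under the stronger hypothesis $\varepsilon^3n_2\gg(\log n)^{3/2}$; recovering the factor $k^{-3/2}$ forces a cycle-lemma / local-central-limit argument through the two-type setting, where the increment $D_t$ depends on the randomly evolving type of the $t$-th individual, so the exchangeability underlying the classical single-type argument is lost and one must work with the type-indexed walk directly (or compare it with a suitable exchangeable dominating process).
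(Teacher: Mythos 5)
Your reduction to the branching process via the coupling and the first-moment bound $\Prob(L_1\ge k)\le k^{-1}\bigl(n_1\Prob(|\mathcal{T}^1_{\mathbf{n},P}|\ge k)+n_2\Prob(|\mathcal{T}^2_{\mathbf{n},P}|\ge k)\bigr)$ is exactly the paper's starting point, and the arithmetic at the end is correct \emph{if} your proposed tail bound holds. But the proof as written has a genuine gap, and you name it yourself: the bound $\Prob(|\mathcal{T}^i_{\mathbf{n},P}|\ge k)\le C\varepsilon^{-2}k^{-3/2}e^{-c\varepsilon^2 k}$ is stated, not proved, and you explicitly flag the Otter--Dwass / local-limit step for a type-indexed walk as ``the main obstacle.'' Until that lemma is established, the argument does not close. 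You also correctly observe that the crude Chernoff bound $e^{-c\varepsilon^2 k}$ is too weak under the hypothesis $\varepsilon^3 n_2\to\infty$; but instead of concluding that the missing polynomial factor must be fought for, notice that a much weaker estimate suffices and is essentially free.

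What you overlooked is that Markov's inequality applied to $|\mathcal{T}^i_{\mathbf{n},P}|$ itself already does the job. Since the branching process is subcritical, its expected total progeny is finite, and a short generating-function computation (the same one used for the dual process in Section~3.2) gives
\[
\E\bigl(|\mathcal{T}^i_{\mathbf{n},P}|\bigr)=\frac{1+\mu_{i,3-i}-\mu_{3-i,3-i}}{1-\mu_{1,1}-\mu_{2,2}+\mu_{1,1}\mu_{2,2}-\mu_{1,2}\mu_{2,1}}\sim\varepsilon^{-1},
\]
using only~\eqref{sumExpectationSub}. Then $\Prob(|\mathcal{T}^i_{\mathbf{n},P}|\ge L)\le\varepsilon^{-1}L^{-1}$, and plugging this into the first-moment bound with $L=\delta n^{2/3}$ yields $\Prob(L_1\ge L)\le n\varepsilon^{-1}L^{-2}=(\delta^2\varepsilon n^{1/3})^{-1}\to0$, because $\varepsilon^3 n\ge\varepsilon^3 n_2\to\infty$. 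This double application of Markov's inequality gives $L_1=o(n^{2/3})$ directly, with no concentration argument at all. Your proposed route would, if completed, prove the sharper bound $L_1=O(\varepsilon^{-2}\log(\varepsilon^3 n))$ (roughly what the paper's closing remark hints at), but it is substantially more work in the two-type setting precisely for the reasons you list, and it is not needed for the stated theorem.
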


Note that analogously to Theorem~\ref{mainresult} the parameter $\varepsilon>0$ describes the \emph{distance} to the critical point from below. In other words, by~\eqref{sumExpectationSub}, we know that the Perron-Frobenius eigenvalue $\lambda$ of the offspring expectation matrix $M$ of a $2$-type binomial branching process $\mathcal{T}_{\mathbf{n},P}$ with parameters as in Theorem~\ref{mainresult2} satisfies
\begin{equation}\label{lambdaSub}
\lambda=1-\varepsilon+o(\varepsilon).
\end{equation} 

We will dedicate most of this paper  to the more sophisticated weakly supercritical regime. A sketch of the proof of Theorem~\ref{mainresult} is given in Subsection~\ref{sketch}, properties of supercritical branching processes will be analysed in Section~\ref{branchingSection}, and the actual proof of Theorem~\ref{mainresult} is provided in Section~\ref{proof}. The weakly subcritical regime follows in  Section~\ref{sec:weaklysub} with the proof of Theorem~\ref{mainresult2}. In Sections~\ref{sec:supercritical}~and~\ref{sec:subcritical} we also consider the size of the largest component in the regimes where the distance to the critical value is a \emph{constant} (independent of $n_1$ and $n_2$). In the supercritical regime the largest component will already be a \emph{giant component}, i.e.\ it is unique and of linear size. Similarly, we also get a stronger upper bound on the size of all components in the subcritical regime. These results can also be proved using the general framework in~\cite{BollobasJansonRiordan07}, however, we give alternative simple proofs.

\subsection{Proof sketch of Theorem~\ref{mainresult}}\label{sketch}

We extend the method employed by Bollob\'as and Riordan~\cite{BollobasRiordan12} for the study of the weakly supercritical regime of $G(n,p)$. To prove Theorem~\ref{mainresult} we consider the set $S$ of vertices in \enquote{large} components. The first goal is to show that the size of $S$ is concentrated around $2\varepsilon n$ by applying Chebyshev's inequality. We calculate asymptotically matching upper and lower bounds for the expected size of $S$ by coupling the breadth-first component exploration process from below and above with $2$-type branching processes. Once this is done, using a more refined version of this idea, we show that the square of this expectation is an upper bound for the second moment of the size of $S$, therefore the variance of the size of $S$ is indeed \enquote{small} compared to the square of the expectation and concentration follows by Chebyshev's inequality. So now we know that $whp$ the appropriate number of vertices lie in \enquote{large} components, but there might be several distinct such components all of which may also be much smaller than claimed in Theorem~\ref{mainresult}. However, we can construct a random graph via a two-round exposure. In the first round we reduce the probability of including some edges by a tiny bit and note that the above arguments will still hold in this setting. In the second round we once again look at each pair not yet connected by an edge and \enquote{sprinkle} an edge with a tiny probability independently for each such pair. By choosing the magnitude of these probabilities appropriately we can ensure that the resulting random graph has the same distribution as $G(\mathbf{n},P)$ and thus we can identify both random graphs by a coupling argument. Analysing the probability that \enquote{large} components are connected by at least one edge and using the union bound we show that $whp$ almost all vertices from $S$ lie in a single component of $G(\mathbf{n},P)$.

\subsection{Related work}\label{related}

The general inhomogeneous random graph model $G(n,c\kappa_n)$ studied by  Bollob\'as, Janson, and Riordan~\cite{BollobasJansonRiordan07} is closely related to the model $G(\mathbf{n},P)$. For any $n\in\N$ consider a random sequence ${\bf x}_n = (x_1,\dots, x_n)$ of points from a separable metric space ${\mathcal S}$ equipped with a Borel probability measure $\nu$ and let $\nu_n$ be the empirical distribution of  ${\bf x}_n$. Assume that $\nu_n$ converges in probability to $\nu$, then the triple $({\mathcal S},\nu,({\bf x}_n)_{n\geq 1})$ is called a \emph{vertex space}. Furthermore let $\{\kappa_n\}$ be a sequence of symmetric non-negative $\nu$-measurable functions  on ${\mathcal S}\times{\mathcal S}$, which converges to a limit $\kappa$, and let $c>0$ be a constant. Then the random graph $G(n,c\kappa_n)$ is a graph with vertex set $[n],$ where each pair of vertices $\{k,l\}$ is connected by an edge with probability $p_{k,l}:=\min\{1,c\kappa_n(x_k,x_l)/n\}$ independently of all other pairs.  

It is proved  that with respect to the parameter $c$ there is a phase transition concerning the size of the largest component. In particular, the existence and uniqueness of the giant component in $G(n,c\kappa_n)$ in the supercritical regime are proved using an appropriate multi-type branching process and analysing an integral operator $T_{\kappa}$. The critical point of the phase transition is characterised by $c_0:=||T_{\kappa}||^{-1}$: if $c\leq c_0,$ then the random graph contains only small components, but if $c>c_0,$ then there is a giant component which contains asymptotically $\rho_c n$ vertices,  where $\rho_c$ is independent of $n$ and grows linearly in $c-c_0>0.$ 

By contrast the focus of our paper lies on the \emph{weakly} supercritical regime (Theorem~\ref{mainresult}), i.e. the distance $\varepsilon=\varepsilon(\mathbf{n})$ from the critical point of the phase transition tends to zero as the number of vertices increases. The analysis in this regime is in general quite sophisticated, in comparison with the supercritical regime, i.e.\ when $\varepsilon>0$ is a constant independent of $\mathbf{n}.$ In general it is not sufficient to only scale the edge probabilities multiplicatively as in $G(n,c\kappa_n)$, since even if $\varepsilon\to0$ the \emph{spectral gap} of the operator $(1+\varepsilon)c_0T_\kappa$ is always bounded away from $0$. In contrast to this, the spectral gap of the offspring expectation matrix in $G(\mathbf{n},P)$ is given by $\mu_{1,2}+\mu_{2,1}$ and thus may\emph{ tend to zero arbitrarily quickly}. Similarly, if one of the types has significantly fewer vertices than the others, it will not influence the behaviour of $G(n,c\kappa_n)$; however we show that in the weakly supercritical regime of $G(\mathbf{n},P)$ these vertices may still be crucial in the evolution of the largest component and ignoring them may even result in a subcritical process. For $\varepsilon\to 0,$ such an example is given by $n_2=\sqrt{\varepsilon} n_1\, ,$ $\mu_{2,1}=1$ and thus $\mu_{1,1}=1-\sqrt{\varepsilon}+\varepsilon+o(\varepsilon).$

\section{Multi-type binomial branching processes in the supercritical regime}\label{branchingSection}

Later we will study the component sizes of the random graph $G(\mathbf{n},P)$ using $2$-type binomial branching processes. In this section we investigate some of their most important properties. We start with a simplified version of a key result concerning the survival probability of a general multi-type Galton-Watson branching processes.

\begin{lemma}[e.g.~\cite{HarrisBook}]\label{harris}
Let $\mathcal{T}_{\mathbf{n},P}$ be a $2$-type binomial branching process  with parameters $n_1\in\N$ and $n_2\in \N,$ with $n_1\geq n_2,$ and $$P=\left(p_{i,j}\right)_{i,j\in\{1,2\}}\in (0,1]^{2\times2}\, .$$  Let $\lambda=\lambda(n_1,n_2)>0$ be the Perron-Frobenius eigenvalue of its offspring expectation matrix $M=\left(\mu_{i,j}\right)_{i,j\in\{1,2\}},$ where $\mu_{i,j}=p_{i,j}n_j$\,, and let $(\rho_1,\rho_2)$ be the pair of survival probabilities. Then the following holds:
\begin{itemize}
\item if $\lambda\leq 1$, we have $\rho_1=\rho_2=0;$
\item if $\lambda>1$, then $(\rho_1,\rho_2)$ is the unique positive solution of 
\begin{equation}\label{PGF}
F_1(\rho_1,\rho_2)=F_2(\rho_1,\rho_2)=0,
\end{equation}
where 
\begin{equation}\label{PGF2}
F_i(\rho_1,\rho_2):=1-\rho_i-\left(1-\frac{\mu_{i,1}\rho_1}{n_1}\right)^{n_1}\left(1-\frac{\mu_{i,2}\rho_2}{n_2}\right)^{n_2},\text{ for } i\in\{1,2\}.
\end{equation}
\end{itemize}  
\end{lemma}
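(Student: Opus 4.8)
The plan is to recognise this as the classical extinction dichotomy for positively regular, non-singular multi-type Galton--Watson processes (as in the cited reference), specialised to the binomial offspring laws of $\mathcal{T}_{\mathbf{n},P}$. The work splits into three parts: writing the offspring probability generating functions explicitly; verifying the two standard hypotheses — \emph{positive regularity} and \emph{non-singularity} — under which the dichotomy holds; and translating the resulting fixed-point characterisation of the extinction probability into the form~\eqref{PGF}--\eqref{PGF2}. For the first part, an individual of type $i$ produces, independently for each $j\in\{1,2\}$, a $\mathrm{Bin}(n_j,p_{i,j})$ number of type-$j$ children with $p_{i,j}=\mu_{i,j}/n_j$, so its offspring generating function is
$$
f_i(s_1,s_2)=\prod_{j=1}^{2}\E\!\left[s_j^{\mathrm{Bin}(n_j,p_{i,j})}\right]=\prod_{j=1}^{2}\bigl(1-p_{i,j}+p_{i,j}s_j\bigr)^{n_j}=\prod_{j=1}^{2}\Bigl(1-\tfrac{\mu_{i,j}}{n_j}(1-s_j)\Bigr)^{n_j}.
$$
These are polynomials, so no integrability issue arises, and the mean matrix of $\mathcal{T}_{\mathbf{n},P}$ is precisely $M=(\mu_{i,j})$ with Perron--Frobenius eigenvalue $\lambda$ as in~\eqref{lambdaEx}.

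Next I would check the hypotheses. Since every $p_{i,j}>0$ and $n_1,n_2\ge 1$, every entry $\mu_{i,j}=p_{i,j}n_j$ of $M$ is strictly positive; hence $M$ is a primitive nonnegative matrix, i.e.\ the process is positively regular, and the Perron--Frobenius theorem supplies a simple largest eigenvalue $\lambda>0$ with strictly positive left and right eigenvectors. The process is also non-singular: if a type-$i$ individual a.s.\ produced exactly one child of a fixed type $j_0$, then $X^v_{j'}=0$ a.s.\ for $j'\ne j_0$, forcing $\mu_{i,j'}=p_{i,j'}n_{j'}=0$, contradicting $p_{i,j'}>0$. With positive regularity and non-singularity the classical theory applies: the extinction probability vector $(q_1,q_2)$, with $q_i:=1-\rho_i$, is the minimal fixed point of $(s_1,s_2)\mapsto(f_1(s_1,s_2),f_2(s_1,s_2))$ in $[0,1]^2$; it equals $(1,1)$ — equivalently $\rho_1=\rho_2=0$ — when $\lambda\le 1$, and when $\lambda>1$ it is the unique fixed point in $[0,1]^2$ other than $(1,1)$, with $q_i<1$ for both $i$ (strict inequality again by irreducibility: if one type becomes extinct a.s., so do the others).

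It remains to rewrite these fixed-point equations. Substituting $s_j=1-\rho_j$ gives $f_i(1-\rho_1,1-\rho_2)=\bigl(1-\tfrac{\mu_{i,1}\rho_1}{n_1}\bigr)^{n_1}\bigl(1-\tfrac{\mu_{i,2}\rho_2}{n_2}\bigr)^{n_2}$, so $q_i=f_i(q_1,q_2)$ becomes exactly $F_i(\rho_1,\rho_2)=0$ with $F_i$ as in~\eqref{PGF2}. Under $\lambda>1$, uniqueness of the non-trivial fixed point of $(f_1,f_2)$ in $[0,1]^2$ transfers to uniqueness of the solution of $F_1=F_2=0$ with $\rho_1,\rho_2>0$ — the trivial fixed point $(1,1)$ corresponding to $(\rho_1,\rho_2)=(0,0)$ — which is the claim; under $\lambda\le 1$ the only solution is $(\rho_1,\rho_2)=(0,0)$, matching $\rho_1=\rho_2=0$.

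The one genuinely delicate point is the critical case $\lambda=1$: since $P$ is allowed to have entries equal to $1$, an individual need not have positive probability of zero offspring, so almost-sure extinction at criticality is not automatic and must be obtained from the non-singularity verification above combined with the classical theorem — this is the step I expect to need the most care. One should also note that ``positive solution'' is to be read inside $[0,1]^2$, the domain on which $F_i$ encodes a probability; ruling out putative solutions with a coordinate exceeding $1$, if one insists on a statement over all of $(0,\infty)^2$, is a routine monotonicity argument using that $(\rho_1,\rho_2)\mapsto\bigl(1-\prod_j(1-\mu_{1,j}\rho_j/n_j)^{n_j},\,1-\prod_j(1-\mu_{2,j}\rho_j/n_j)^{n_j}\bigr)$ maps $[0,1]^2$ into itself and is coordinatewise nondecreasing.
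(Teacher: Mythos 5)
Your proposal is correct and takes the same route the paper implicitly relies on: the paper does not prove this lemma but cites it to Harris's classical theory, and your write-up simply makes explicit the two hypotheses (positive regularity from $p_{i,j}>0$, non-singularity from the binomial offspring laws) needed to invoke that theorem, together with the routine substitution $q_i=1-\rho_i$ turning the fixed-point equations $q_i=f_i(q_1,q_2)$ into $F_i(\rho_1,\rho_2)=0$. Your remark that the map $(\rho_1,\rho_2)\mapsto\bigl(1-\prod_j(1-p_{1,j}\rho_j)^{n_j},\,1-\prod_j(1-p_{2,j}\rho_j)^{n_j}\bigr)$ is a monotone self-map of $[0,1]^2$ also correctly handles the reading of ``unique positive solution,'' and your caution about the critical case $\lambda=1$ when some $p_{i,j}=1$ is well placed but resolved, as you note, by the non-singularity check.
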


We call a branching process that has a positive survival probability \emph{supercritical} and otherwise we call it \emph{subcritical}. 

\begin{remark}
There is a very simple way to see that the survival probabilities must satisfy these equations: We consider the extinction probabilities before and after the \emph{first} step of the process and apply the Binomial Theorem.
\end{remark}

Because the conditions of Theorem~\ref{mainresult} imply that the Perron-Frobenius eigenvalue of the offspring expectation matrix $M$ is strictly larger than $1$, the associated branching process will have a positive survival probability that is given implicitly by~\eqref{PGF}. It is sufficient for us to extract some information about the asymptotic behaviour of the unique positive solution from these equations. However, even trying to solve these equations only asymptotically we have to be very careful with cancellation and take into account higher order terms: This is a major reason why the weakly supercritical regime is significantly harder to analyse than the other regimes.  

\subsection{Asymptotic survival probability}\label{survivalSection}

\begin{lemma}\label{asympSurvival}
Under the conditions as in Theorem~\ref{mainresult} the survival probabilities of the $2$-type binomial branching process $\mathcal{T}_{\mathbf{n},P}$ satisfy \[\rho_1\sim\rho_2\sim 2\varepsilon.\]
\end{lemma}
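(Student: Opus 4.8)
\textbf{Proof proposal for Lemma~\ref{asympSurvival}.}

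The plan is to extract the asymptotics of $(\rho_1,\rho_2)$ directly from the fixed-point equations $F_1(\rho_1,\rho_2)=F_2(\rho_1,\rho_2)=0$ provided by Lemma~\ref{harris}, being careful to keep enough lower-order terms that the leading behaviour survives cancellation. First I would rewrite the products $\bigl(1-\mu_{i,1}\rho_1/n_1\bigr)^{n_1}\bigl(1-\mu_{i,2}\rho_2/n_2\bigr)^{n_2}$ by taking logarithms and expanding $\log(1-x)=-x-x^2/2-O(x^3)$. Since $\rho_1,\rho_2$ will turn out to be $O(\varepsilon)=o(1)$ and $\mu_{i,j}\le\mu_{i,1}+\mu_{i,2}=1+o(1)$, the arguments $\mu_{i,j}\rho_j/n_j$ are $o(1)$, so this expansion is valid; the $n_j$-th power cancels one factor of $n_j$, leaving
\[
\Bigl(1-\tfrac{\mu_{i,1}\rho_1}{n_1}\Bigr)^{n_1}\Bigl(1-\tfrac{\mu_{i,2}\rho_2}{n_2}\Bigr)^{n_2}
=\exp\!\Bigl(-\mu_{i,1}\rho_1-\mu_{i,2}\rho_2-\tfrac{\mu_{i,1}^2\rho_1^2}{2n_1}-\tfrac{\mu_{i,2}^2\rho_2^2}{2n_2}+\text{(smaller)}\Bigr).
\]
Then $F_i=0$ becomes $1-\rho_i=\exp(-\mu_{i,1}\rho_1-\mu_{i,2}\rho_2-R_i)$ with a remainder $R_i\ge 0$ that I expect to be negligible relative to $\rho_i^2$; expanding the exponential to second order gives, roughly, $\rho_i=(\mu_{i,1}\rho_1+\mu_{i,2}\rho_2)-\tfrac12(\mu_{i,1}\rho_1+\mu_{i,2}\rho_2)^2-\tfrac12\rho_i^2+\cdots$. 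This is the standard ``$\rho\approx 2(\lambda-1)$'' computation for a single-type process, and summing the linear part over $i$ with the constraint $\mu_{i,1}+\mu_{i,2}=1+\varepsilon+o(\varepsilon)$ from~\eqref{sumExpectation} should yield $\rho_1,\rho_2\sim 2\varepsilon$.

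The key structural step is to exploit row condition~\eqref{sumExpectation}: because both rows of $M$ have the same sum $1+\varepsilon+o(\varepsilon)$, the linearised system $\rho_i=\mu_{i,1}\rho_1+\mu_{i,2}\rho_2$ is approximately solved by $\rho_1=\rho_2$ — indeed the all-ones vector is an approximate Perron eigenvector with eigenvalue $1+\varepsilon+o(\varepsilon)$, consistent with~\eqref{lambda}. So I would first show $\rho_1\sim\rho_2$ (call the common order $\rho$), then reduce the two equations to a single scalar relation. Adding a suitable combination of the two equations, or just using one row, I get $\rho= (1+\varepsilon+o(\varepsilon))\rho - \tfrac12(1+o(1))\rho^2\cdot(\text{const})+o(\rho^2)$, i.e. $\varepsilon\rho = \tfrac12 c\,\rho^2 + o(\varepsilon\rho)$ for an explicit constant $c$ that should come out to $1$ after combining the $(\mu_{i,1}\rho_1+\mu_{i,2}\rho_2)^2$ and $\rho_i^2$ terms using $\rho_1\sim\rho_2$; dividing by $\rho$ gives $\rho\sim 2\varepsilon$. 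To make ``$\rho_1\sim\rho_2$'' rigorous I would take the ratio of the two equations $F_1=F_2$: writing $1-\rho_1$ and $1-\rho_2$ as the two exponentials and taking logs, $\log\frac{1-\rho_1}{1-\rho_2} = (\mu_{2,1}-\mu_{1,1})\rho_1+(\mu_{2,2}-\mu_{1,2})\rho_2+(R_2-R_1)$; since $(\mu_{2,1}-\mu_{1,1})+(\mu_{2,2}-\mu_{1,2})=o(\varepsilon)$ and both $\rho_i=o(1)$, the right side is $o(1)\cdot\max(\rho_1,\rho_2)$ up to the remainder, forcing $\rho_1-\rho_2 = o(\max(\rho_1,\rho_2))$, hence $\rho_1\sim\rho_2$, provided I have first pinned down that $\rho_1,\rho_2=\Theta(\varepsilon)$.

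The main obstacle is exactly the bookkeeping of error terms flagged in the remark before Subsection~\ref{survivalSection}: a naive first-order solution of the fixed-point equations gives $0=\varepsilon\rho+o(\varepsilon\rho)$, which is vacuous, so I must retain the quadratic terms $\rho_i^2$ and $(\mu_{i,1}\rho_1+\mu_{i,2}\rho_2)^2$ and simultaneously prove that the genuinely cubic-or-smaller terms — in particular the ``variance'' remainders $R_i$ of order $\rho_j^2/n_j$ coming from the finite-$n$ correction in $(1-x/n)^n$ versus $e^{-x}$, and the cubic terms in the log- and exp-expansions — are $o(\varepsilon^2)$. The $R_i$ terms are $O(\rho^2/n_2)=O(\varepsilon^2/n_2)=o(\varepsilon^2)$ trivially, and the cubic terms are $O(\rho^3)=O(\varepsilon^3)=o(\varepsilon^2)$, so this should go through, but it requires first establishing a crude two-sided bound $\rho_i=\Theta(\varepsilon)$ before the refined expansion can be trusted. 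That crude bound I would get by a standard sandwich: monotonicity/convexity of the generating-function map gives $\rho_i\ge$ (survival probability of a dominated single-type process) and $\rho_i\le$ (that of a dominating one), each of which is $\Theta(\varepsilon)$ by the classical $G(n,(1+\varepsilon)/n)$ computation, or alternatively by noting the Perron eigenvalue is $1+\varepsilon+o(\varepsilon)$ and invoking a general branching-process estimate $\rho\asymp\lambda-1$ near criticality. Role of Condition~\eqref{asymptoticalCondition} is only to guarantee $\lambda>1$ genuinely (so Lemma~\ref{harris} gives a positive solution) and does not enter the asymptotic identity itself.
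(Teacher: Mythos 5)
Your overall strategy --- reduce to a single-type calculation via a sandwich --- is the same as the paper's, but the specific route you take has a genuine gap, concentrated exactly where the paper's argument is cleverest. Your plan requires either an a priori two-sided bound $\rho_i=\Theta(\varepsilon)$, or a direct proof that $\rho_1\sim\rho_2$, before the refined second-order expansion can be trusted. Both of these break down when $\mu_{1,2}$ and $\mu_{2,1}$ are small, a regime the theorem explicitly allows (Condition~\eqref{asymptoticalCondition} only constrains how slowly $\mu_{2,1}$ may decay; $\mu_{1,2}=\mu_{2,1}n_2/n_1$ can vanish much faster). For the crude lower bound, the ``dominated single-type process'' has offspring mean $\min(\mu_{1,1},\mu_{2,1})+\min(\mu_{1,2},\mu_{2,2})$, which can be $o(1)$ --- e.g.\ with $\mu_{1,2}=\mu_{2,1}=\varepsilon^2$, $n_1=n_2$, it is $2\varepsilon^2$ and the dominated process is very subcritical --- so it gives only $\rho_i\ge 0$. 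For the ratio argument, dividing the two log-equations yields $(\rho_1-\rho_2)\bigl(-1+O(\rho)-a\bigr)=o(\varepsilon)\max\rho_i+O(\rho^2/n_2)$ with $a=\mu_{2,1}-\mu_{1,1}$; the row-sum condition only controls $a+b$, not $a$ individually, and in the near-decoupled case $a\to -1$, so the prefactor of $\rho_1-\rho_2$ vanishes and the argument gives no information. There is also a computational slip in the expansion: $1-\rho_i=\exp(-S_i-R_i)$ gives \emph{either} $\rho_i=S_i+R_i-\tfrac12\rho_i^2+\cdots$ (taking logs) \emph{or} $\rho_i=S_i+R_i-\tfrac12 S_i^2+\cdots$ (expanding the exponential), but not both quadratic terms at once; keeping both would give $\rho\sim\varepsilon$, not $2\varepsilon$.

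The paper's proof sidesteps all of this with a single monotonicity observation on the functions $F_i$ themselves: after assuming $\rho_1\ge\rho_2$ without loss of generality, increasing the smaller argument and decreasing the larger one shows $F_1(\rho_1,\rho_2)\le F_1(\rho_1,\rho_1)=:f_1(\rho_1)$ and $F_2(\rho_1,\rho_2)\ge F_2(\rho_2,\rho_2)=:f_2(\rho_2)$. Since $F_i(\rho_1,\rho_2)=0$, one gets $f_1(\rho_1)\ge 0$ and $f_2(\rho_2)\le 0$. Each $f_i$ is the single-type equation with slope $1+\varepsilon_i$, $\varepsilon_i\sim\varepsilon$, whose unique positive zero $\rho_i^*$ satisfies $\rho_i^*\sim 2\varepsilon$; the sign conditions force $\rho_1\le\rho_1^*$ and $\rho_2\ge\rho_2^*$. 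Combined with $\rho_2\le\rho_1$, the chain $\rho_2^*\le\rho_2\le\rho_1\le\rho_1^*$ delivers both $\rho_1\sim\rho_2$ and the value $2\varepsilon$ in one stroke, with no need to compare rows of $M$ against each other, no a priori $\Theta(\varepsilon)$ bound, and no sensitivity to the size of the off-diagonal entries. I recommend replacing your ratio step and crude bound with this pointwise sandwich on $F_i$.
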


\begin{proof}
The key idea is to find suitable bounding functions for the $F_i$'s defined in~\eqref{PGF2}, for which the asymptotic values of the zeros can be computed easily, and then to observe that these coincide for the upper and lower bound.

First observe the following fact: If $\rho_1\geq \rho_2$, we have $F_1(\rho_1,\rho_2)\leq F_1(\rho_1,\rho_1)$ and $F_2(\rho_1,\rho_2)\geq F_2(\rho_2,\rho_2)$; Analogously, if $\rho_1<\rho_2$,  then $F_2(\rho_1,\rho_2)< F_2(\rho_2,\rho_2)$ and $F_1(\rho_1,\rho_2)> F_1(\rho_1,\rho_1)$. Thus, without loss of generality due to the Subsubsequence~Principle~(e.g.~\cite{JansonLuczakRucinskiBook}), we assume $\rho_1\geq \rho_2$  and consider the bounding functions $F_i(\rho_i,\rho_i),$ for $i\in\{1,2\}$:
 \begin{align*}
F_i(\rho_i,\rho_i)&=1-\rho_i-\left(1-\frac{\mu_{i,1}\rho_i}{n_1}\right)^{n_1}\left(1-\frac{\mu_{i,2}\rho_i}{n_2}\right)^{n_2}\\&= 1-\rho_i-\exp\left(-(\mu_{i,1}+\mu_{i,2})\rho_i-O\left(\frac{\mu_{i,1}^2\rho_i^2}{n_1}+\frac{\mu_{i,2}^2\rho_i^2}{n_2}\right)\right),
\end{align*}
by the Taylor-expansion of the natural logarithm around $1$. Since $\mu_{i,1}\leq 2$ and $\mu_{i,2}\leq 2,$ by the conditions of Theorem~\ref{mainresult} and the fact that $\rho_i\leq 1$ (since it is a probability), we have 
\begin{align*}
F_i(\rho_i,\rho_i)&=1-\rho_i-\exp\left(-\left[\mu_{i,1}+\mu_{i,2} + O\left(n_2^{-1}\right)\right]\rho_i\right)\\&=1-\rho_i-\exp\left(-(1+\varepsilon_i)\rho_i\right),
\end{align*}
where $\varepsilon_i=\mu_{i,1}+\mu_{i,2}-1+O\left(n_2^{-1}\right)\sim\varepsilon$, by~\eqref{asymptoticalCondition}~and~\eqref{sumExpectation}. We define 
$$
f_i(\rho_i):=1-\rho_i-\exp\left(-(1+\varepsilon_i)\rho_i\right)
$$
and note that solving 
\[f_i(\rho_i^*)=0\] 
asymptotically is a well-known problem that turns up when calculating the asymptotic value of the survival probability for a single-type Poisson branching process. Using the Taylor-expansion of the natural logarithm we get \[\varepsilon_i=\frac{-\log(1-\rho_i^*)-\rho_i^*}{\rho_i^*}=\sum_{m=1}^{\infty}\frac{(\rho_i^*)^m}{m+1}.\]
Since the coefficients in this series are all positive and $\varepsilon_i\to 0$, this shows that $\rho_i\to0$ and thus 
\[\varepsilon_i=\frac{\rho_i^*}{2}+O((\rho_i^*)^2).\] 
Having established the asymptotic behaviour of $\rho_1^*$ and $\rho_2^*$ it remains to show that $\rho_2^*\leq \rho_2$ and $\rho_1\leq\rho_1^*$, since this together with $\rho_2\leq\rho_1$ and $\varepsilon_2\sim\varepsilon_1\sim\varepsilon$ implies $\rho_2\sim\rho_1\sim 2\varepsilon$. 

For this last step, assume towards contradiction that $\rho_1>\rho_1^*$ and observe that $f_1$ is negative on the interval $(\rho_1^*,1]$. Since $(\rho_1,\rho_2)$ is by definition a solution of~\eqref{PGF} we have
$$0=F_1(\rho_1,\rho_2)\leq f_1(\rho_1)<0,$$ 
a contradiction. Analogously, $\rho_2<\rho_2^*$ leads to a contradiction since $f_2$ is positive on $(0,\rho_2^*)$, completing the proof.
\end{proof}

\subsection{Dual processes}\label{dualSection}

In the proof of Theorem~\ref{mainresult} we consider the supercritical branching process $\mathcal{T}_{\mathbf{n},P}$ associated with $G(\mathbf{n},P)$ and we will need a good upper bound on the probability its total number of offspring of type $j\in\{1,2\}$ is at least $l_j$, for carefully chosen real functions $l_1$ and $l_2$.  Since this probability is $1$ if the process survives, this reduces to analysing the conditional probability given the event $\mathcal{D}$ that the process dies out. We call the resulting $2$-type binomial branching process the \emph{dual process} and we can describe its offspring distributions as follows. We need to know, for a vertex $v$ of type $i\in\{1,2\}$ born in generation $I_t$, for some integer $t\ge 0,$ and a potential child $u$ of type $j\in\{1,2\},$ whether the edge $e=\{u,v\}$ is present in the dual process, i.e.\ conditioned on $\mathcal{D}$. Let $\event{e}$ be the event that $u$ is a child of $v$ in $\mathcal{T}_{\mathbf{n},P}$ and note that conditioning on $\event{e}$ will decrease the probability of $\mathcal{D}.$ More precisely, let $\mathbf{Y}=(Y_1,Y_2)$ denote the vector of the number of individuals of each type in generation $I_{t+1}$. Since $Y_1$ and $Y_2$ are independent binomially distributed random variables, calculating $\Prob\left(\mathcal{D}| \event{e}\right)$ and $\Prob\left(\mathcal{D}|\neg \event{e}\right)$ by conditioning on $\mathbf{Y}$ leads to
\begin{align*}
\Prob\left(\mathcal{D}\cond \event{e}\right) &= \sum_{r_j=0}^{n_j-1}\Prob\left(Y_j=r_j+1\cond \event{e}\right)(1-\rho_j)^{r_j+1}\\
&\qquad\qquad\qquad\qquad\quad\cdot\sum_{r_{3-j}=0}^{n_{3-j}}\Prob\left(Y_{3-j}=r_{3-j}\cond \event{e}\right)(1-\rho_{3-j})^{r_{3-j}}
\end{align*}
and 
\begin{align*}
\Prob\left(\mathcal{D}\cond\neg \event{e}\right) & =\sum_{r_j=0}^{n_j-1}\Prob\left(Y_j=r_j\cond \neg\event{e}\right)(1-\rho_j)^{r_j}\\
&\qquad\qquad\qquad\qquad\quad\cdot\sum_{r_{3-j}=0}^{n_{3-j}}\Prob\left(Y_{3-j}=r_{3-j}\cond \neg\event{e}\right)(1-\rho_{3-j})^{r_{3-j}}.
\end{align*}
Observe that by definition
$$\Prob\left(Y_j=r_j+1\cond \event{e}\right)=\Prob\left(Y_j=r_j\cond \neg\event{e}\right),$$
for all $r_j=0,\dots,n_j-1,$ and $Y_{3-j}$ is independent of $\event{e}$ and thus 
\begin{align}
\frac{\Prob\left(\mathcal{D}\cond \event{e}\right)}{\Prob\left(\mathcal{D}\cond \neg \event{e}\right)}&=1-\rho_j.\label{ProbCondEdge2}
\end{align} Therefore we get 
\begin{align}
\Prob\left(\event{e}|\mathcal{D}\right)\nonumber&=\frac{\Prob\left(\mathcal{D}| \event{e} \right)\Prob(\event{e})}{\Prob\left(\mathcal{D}| \event{e}\right)\Prob(\event{e})+\Prob\left(\mathcal{D}|\neg \event{e} \right)\Prob(\neg \event{e})}\\\nonumber&=\frac{\frac{\Prob\left(\mathcal{D}\cond \event{e}\right)}{\Prob\left(\mathcal{D}\cond \neg \event{e}\right)}\cdot\Prob(\event{e})}{\frac{\Prob\left(\mathcal{D}\cond \event{e}\right)}{\Prob\left(\mathcal{D}\cond \neg \event{e}\right)}\cdot\Prob(\event{e})+\Prob(\neg \event{e})}\\\nonumber&\stackrel{\eqref{ProbCondEdge2}}{=}\frac{p_{i,j}(1-\rho_j)}{1-\rho_j p_{i,j}}=:\pi_{i,j}\, ,
\end{align} 
uniformly for all edges $e$ (with one end point of type $i$ and the other of type $j$). An analogous calculation shows that the presence of $e$ does not depend on any other edges, i.e.\ the dual process is also a $2$-type binomial branching process. Hence, we write $\Pi=(\pi_{i,j})_{i,j\in\{1,2\}}$, $H=(h_{i,j})_{i,j\in\{1,2\}}$, where $h_{i,j}:=\pi_{i,j}n_j$, and denote the dual process of $\mathcal{T}_{\mathbf{n},P}$ by $\mathcal{T}_{\mathbf{n},\Pi}$. 

Intuitively it is obvious that the dual process of any supercritical process is subcritical. For completeness we give a short proof for the processes that we use. First observe that for each pair $(i,j)\in\{1,2\}^2$ we have, by Condition~\eqref{sumExpectation}, $p_{i,j}=O(n_j^{-1})$ and thus 
\begin{equation}
\pi_{i,j}=p_{i,j}(1-\rho_j)(1+O(n_j^{-2}\rho_j)).\label{pi}
\end{equation} 

\begin{lemma}
Let $\mathcal{T}_{\mathbf{n},P}$ be a $2$-type binomial branching process satisfying the conditions of Theorem~\ref{mainresult}. Then the offspring expectation matrix $H=(h_{i,j})_{i,j\in\{1,2\}}$ of the dual process $\mathcal{T}_{\mathbf{n},\Pi}$ satisfies 
\begin{equation}\label{dualLargestEigenvalue}
h_{\iota,1}+h_{\iota,2}=1-\varepsilon+o(\varepsilon), \text{ for } \iota\in\{1,2\},
\end{equation} and thus we have  $\lambda=1-\varepsilon+o(\varepsilon)$ for the Perron-Frobenius eigenvalue $\lambda$ of $H$.
\end{lemma}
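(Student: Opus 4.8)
The plan is to reduce everything to the two inputs already available: the estimate~\eqref{pi} for the dual edge probabilities and the asymptotics $\rho_1\sim\rho_2\sim2\varepsilon$ from Lemma~\ref{asympSurvival}. First I would rewrite the entries of $H$ in terms of those of $M$. Multiplying~\eqref{pi} by $n_j$ gives, for each pair $(i,j)\in\{1,2\}^2$,
\[
h_{i,j}=\pi_{i,j}n_j=\mu_{i,j}(1-\rho_j)\bigl(1+O(n_j^{-2}\rho_j)\bigr).
\]
Since $\mu_{i,j}\le 2$, $1-\rho_j\le 1$, $\rho_j=O(\varepsilon)$, and $n_j^{-2}\le n_2^{-2}=o(1)$ because $n_2\to\infty$, the multiplicative correction turns into an additive error of order $O(n_2^{-2}\varepsilon)=o(\varepsilon)$ once the factor $\mu_{i,j}(1-\rho_j)=O(1)$ is taken into account. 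Hence $h_{i,j}=\mu_{i,j}(1-\rho_j)+o(\varepsilon)$ for each pair.

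Next I would sum over $j$. For fixed $\iota\in\{1,2\}$,
\[
h_{\iota,1}+h_{\iota,2}=\mu_{\iota,1}(1-\rho_1)+\mu_{\iota,2}(1-\rho_2)+o(\varepsilon)
=(\mu_{\iota,1}+\mu_{\iota,2})-(\mu_{\iota,1}\rho_1+\mu_{\iota,2}\rho_2)+o(\varepsilon).
\]
By~\eqref{sumExpectation} the first bracket is $1+\varepsilon+o(\varepsilon)$. For the second bracket I would invoke the \emph{full} strength of Lemma~\ref{asympSurvival}: writing $\rho_1=2\varepsilon+o(\varepsilon)$ and $\rho_2=2\varepsilon+o(\varepsilon)$ and using $\mu_{\iota,1}+\mu_{\iota,2}=1+\varepsilon+o(\varepsilon)=O(1)$, one gets $\mu_{\iota,1}\rho_1+\mu_{\iota,2}\rho_2=2\varepsilon(\mu_{\iota,1}+\mu_{\iota,2})+o(\varepsilon)=2\varepsilon+o(\varepsilon)$. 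Substituting yields $h_{\iota,1}+h_{\iota,2}=(1+\varepsilon)-2\varepsilon+o(\varepsilon)=1-\varepsilon+o(\varepsilon)$, which is~\eqref{dualLargestEigenvalue}.

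For the Perron--Frobenius eigenvalue, I would simply repeat the computation that produced~\eqref{lambda} from~\eqref{sumExpectation}, now applied to $H$ in place of $M$ and with $1+\varepsilon$ replaced by $1-\varepsilon$: fixing an arbitrary $\delta\in(0,1)$, eventually $\lvert h_{\iota,1}+h_{\iota,2}-(1-\varepsilon)\rvert\le\delta\varepsilon$ for $\iota\in\{1,2\}$, so the argument of the square root in the eigenvalue formula~\eqref{lambdaEx} (which depends only on the row sums and on $h_{1,1}+h_{2,2}=O(1)$) is squeezed between $\bigl(2(1-\varepsilon\mp\delta\varepsilon)-(h_{1,1}+h_{2,2})\bigr)^2$, and letting $\delta\to0$ gives $\lambda=1-\varepsilon+o(\varepsilon)$ for $H$.

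The only place demanding genuine care — and hence the main (if modest) obstacle — is step two: one must not merely bound $\mu_{\iota,1}\rho_1+\mu_{\iota,2}\rho_2$ by $O(\varepsilon)$ but pin down its leading constant as exactly $2$, which requires both that $\rho_1,\rho_2$ are asymptotically $2\varepsilon$ (not just $O(\varepsilon)$) and that $\mu_{\iota,1}+\mu_{\iota,2}\to1$; getting either of these wrong would spoil the cancellation $1+\varepsilon-2\varepsilon$. Likewise one should double-check that the $O(n_j^{-2}\rho_j)$ term from~\eqref{pi} is truly negligible against $\varepsilon$, which is immediate once it is multiplied out against the bounded factor $\mu_{\iota,j}(1-\rho_j)$.
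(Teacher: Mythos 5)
Your proof is correct and follows essentially the same route as the paper: expand $h_{i,j}=\mu_{i,j}(1-\rho_j)(1+O(n_j^{-2}\rho_j))$ via~\eqref{pi}, absorb the correction into $o(\varepsilon)$, substitute $\rho_1,\rho_2\sim2\varepsilon$ from Lemma~\ref{asympSurvival} together with~\eqref{sumExpectation}, and then repeat the eigenvalue computation used for~\eqref{lambda}. The paper compresses the row-sum step into the single line $h_{\iota,1}+h_{\iota,2}=(\mu_{\iota,1}+\mu_{\iota,2})(1-2\varepsilon)=1-\varepsilon+o(\varepsilon)$, whereas you split off the cross terms $\mu_{\iota,1}\rho_1+\mu_{\iota,2}\rho_2$ and estimate them directly; these are the same calculation carried out with different bookkeeping.
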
 

\begin{proof}
By~\eqref{pi}, Lemma~\ref{asympSurvival} and Condition~\eqref{sumExpectation} we get $$h_{\iota,1}+h_{\iota,2}=\left(\mu_{\iota,1}+\mu_{\iota,2}\right)(1-2\varepsilon)=1-\varepsilon+o(\varepsilon), \text{ for } \iota\in\{1,2\}.$$  The second statement follows analogously to~\eqref{lambdaSub}.
\end{proof}

 The benefit of using the \emph{subcritical} dual process $\mathcal{T}_{\mathbf{n},\Pi}$ is that we can bound the expected total number of offspring of each type.

\begin{lemma}\label{dualProcess}
For $i\in\{1,2\}$ let $\mathcal{T}_{\mathbf{n},P}^i$  be a $2$-type binomial branching process satisfying the conditions of Theorem~\ref{mainresult}. Then the associated dual process $\mathcal{T}_{\mathbf{n},\Pi}^i$ satisfies 
\begin{equation}\label{dualExp}
\E\left(\left|\mathcal{T}_{\mathbf{n},\Pi}^i\cap V_j\right|\right)\leq\varepsilon^{-1}, \text{ for }j\in\{1,2\}.
\end{equation} 
Moreover, for any real functions $l_1$ and $l_2$\,, this implies that \[\Prob\left(\left|\mathcal{T}_{\mathbf{n},P}^i\cap V_1\right|\geq l_1\vee\left|\mathcal{T}_{\mathbf{n},P}^i\cap V_2\right|\geq l_2\right)\leq 2\varepsilon+\varepsilon^{-1}l_1^{-1}+\varepsilon^{-1} l_2^{-1}+o(\varepsilon),\] and in particular
\begin{equation}\label{dualProbEst}
\Prob\left(\left|\mathcal{T}_{\mathbf{n},P}^i\cap V_1\right|\geq l_1\vee\left|\mathcal{T}_{\mathbf{n},P}^i\cap V_2\right|\geq l_2\right)\leq (2+o(1))\varepsilon,
\end{equation} if $\varepsilon^2 l_1\to\infty$ and $\varepsilon^2 l_2\to\infty.$
\end{lemma}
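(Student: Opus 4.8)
The plan is to bound the expected total number of offspring of each type in the subcritical dual process $\mathcal{T}_{\mathbf{n},\Pi}^i$, and then feed this into Markov's inequality together with Lemma~\ref{asympSurvival}. First I would set up the standard recursion for the expected generation sizes: if $(W_t(1),W_t(2))_{t\ge0}$ denotes the vector of expected numbers of individuals of each type in generation $t$ of $\mathcal{T}_{\mathbf{n},\Pi}^i$, then $(W_t(1),W_t(2))=e_i H^t$ (row vector times powers of the offspring expectation matrix $H$), so that $\E(|\mathcal{T}_{\mathbf{n},\Pi}^i\cap V_j|)=\sum_{t\ge0}(e_iH^t)_j$. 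The key structural input is the previous lemma: each row of $H$ sums to $1-\varepsilon+o(\varepsilon)$, i.e.\ $H\mathbf{1}=(1-\varepsilon+o(\varepsilon))\mathbf{1}$, so $\mathbf{1}$ is (asymptotically) a right eigenvector with eigenvalue $1-\varepsilon+o(\varepsilon)<1$. This immediately gives $W_t(1)+W_t(2)=(1-\varepsilon+o(\varepsilon))^t$ and hence, summing the geometric series, $\E(|\mathcal{T}_{\mathbf{n},\Pi}^i\cap V_1|)+\E(|\mathcal{T}_{\mathbf{n},\Pi}^i\cap V_2|)\le \sum_{t\ge0}(1-\varepsilon+o(\varepsilon))^t = (1+o(1))\varepsilon^{-1}$, which in particular yields the per-type bound~\eqref{dualExp} (one should be slightly careful to phrase the $o(\varepsilon)$ uniformly, but since all entries of $H$ are nonnegative this is routine).

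Next I would transfer this to the original supercritical process. By the law of total probability, conditioning on the extinction event $\mathcal{D}$ and its complement,
\[
\Prob\!\left(|\mathcal{T}_{\mathbf{n},P}^i\cap V_1|\ge l_1 \vee |\mathcal{T}_{\mathbf{n},P}^i\cap V_2|\ge l_2\right) \le \Prob(\neg\mathcal{D}) + \Prob\!\left(|\mathcal{T}_{\mathbf{n},P}^i\cap V_1|\ge l_1 \vee |\mathcal{T}_{\mathbf{n},P}^i\cap V_2|\ge l_2 \cond \mathcal{D}\right).
\]
The first term is the survival probability $\rho_i$, which by Lemma~\ref{asympSurvival} is $(2+o(1))\varepsilon \le 2\varepsilon+o(\varepsilon)$. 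For the second term, observe that conditioned on $\mathcal{D}$ the process $\mathcal{T}_{\mathbf{n},P}^i$ has exactly the law of the dual process $\mathcal{T}_{\mathbf{n},\Pi}^i$ (this is precisely the content of the dual-process construction preceding the lemma, where the edge probabilities $\pi_{i,j}$ were derived), so a union bound plus Markov's inequality applied to each $|\mathcal{T}_{\mathbf{n},\Pi}^i\cap V_j|$ gives
\[
\Prob\!\left(|\mathcal{T}_{\mathbf{n},\Pi}^i\cap V_1|\ge l_1 \vee |\mathcal{T}_{\mathbf{n},\Pi}^i\cap V_2|\ge l_2\right) \le \frac{\E(|\mathcal{T}_{\mathbf{n},\Pi}^i\cap V_1|)}{l_1} + \frac{\E(|\mathcal{T}_{\mathbf{n},\Pi}^i\cap V_2|)}{l_2} \le \varepsilon^{-1}l_1^{-1}+\varepsilon^{-1}l_2^{-1}.
\]
Adding the two contributions yields the claimed bound $2\varepsilon + \varepsilon^{-1}l_1^{-1} + \varepsilon^{-1}l_2^{-1} + o(\varepsilon)$. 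The final assertion~\eqref{dualProbEst} is then immediate: if $\varepsilon^2 l_1\to\infty$ and $\varepsilon^2 l_2\to\infty$ then $\varepsilon^{-1}l_j^{-1} = o(\varepsilon)$ for $j\in\{1,2\}$, so the whole right-hand side collapses to $(2+o(1))\varepsilon$.

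The main obstacle is not any single hard estimate but rather making the identification ``$\mathcal{T}_{\mathbf{n},P}^i$ conditioned on $\mathcal{D}$ equals $\mathcal{T}_{\mathbf{n},\Pi}^i$'' fully rigorous, i.e.\ checking that the conditioned process is genuinely a $2$-type binomial branching process with independent edges and the computed parameters $\pi_{i,j}$ — this was sketched in the discussion before the lemma (the computation of $\Prob(\event{e}\mid\mathcal{D})=\pi_{i,j}$ and the remark that the presence of $e$ is independent of all other edges), so I would simply invoke it. A secondary technical point is bookkeeping the $o(\varepsilon)$ error terms: one must ensure the geometric-series bound for $\E(|\mathcal{T}_{\mathbf{n},\Pi}^i\cap V_j|)$ holds uniformly and actually gives $\le\varepsilon^{-1}$ (not merely $(1+o(1))\varepsilon^{-1}$) — this can be arranged either by absorbing the $o(\varepsilon)$ slack, or by noting that for $n_2$ large enough the row sums of $H$ are at most $1-\varepsilon/2$, say, giving $\E(|\mathcal{T}_{\mathbf{n},\Pi}^i\cap V_j|)\le 2\varepsilon^{-1}$, and then adjusting the constants in the statement accordingly; since the paper states $\varepsilon^{-1}$ I would use the sharper uniform bound from $H\mathbf{1}=(1-\varepsilon+o(\varepsilon))\mathbf{1}$ directly.
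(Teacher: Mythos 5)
Your proof is correct, and the second half (conditioning on the extinction event $\mathcal{D}$, then applying the union bound and Markov's inequality with the expectation bound~\eqref{dualExp}) is essentially identical to the paper's. The first half, however, takes a genuinely different route: where the paper enumerates ``lines of ancestry'' as strings over $\{1,2\}^2$ and uses the symbolic method to compute closed-form generating functions $G_{j,j'}$, obtaining $\E\left(\left|\mathcal{T}_{\mathbf{n},\Pi}^i\cap V_i\right|\right)=(1-h_{3-i,3-i})/d$ and $\E\left(\left|\mathcal{T}_{\mathbf{n},\Pi}^i\cap V_{3-i}\right|\right)=h_{i,3-i}/d$ with $d=\det(I-H)$ and then lower-bounds $d$, you instead write the expectation as the geometric series $\sum_{t\ge 0}e_iH^t$, note that $\mathbf{1}$ is an approximate right eigenvector of $H$ with eigenvalue $1-\varepsilon+o(\varepsilon)$, and sum the scalar geometric series. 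These are the same quantity ($e_i(I-H)^{-1}$ with entries $\operatorname{adj}(I-H)/d$), but your route avoids the generating-function machinery entirely and, more importantly, scales without pain to $k$ types — the paper itself remarks in the discussion that its GF-based bound ``relies on explicitly calculating a set of generating functions'' and would be ``cumbersome at best'' for $k\ge 3$, whereas your row-sum/spectral argument is dimension-agnostic. One small technical note that applies equally to both proofs: neither route literally gives the clean constant $\varepsilon^{-1}$ in~\eqref{dualExp} but rather $(1+o(1))\varepsilon^{-1}$ (the paper's ``letting $\delta\to 0$'' gives $1/(\varepsilon(1-\delta))$ for each fixed $\delta$, not $\varepsilon^{-1}$); your closing paragraph correctly identifies this slack and correctly observes it is harmless for the conclusion~\eqref{dualProbEst}, so there is no gap.
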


\begin{proof}
Consider the dual process $\mathcal{T}_{\mathbf{n},\Pi}$. We associate a vertex born in generation $I_t$\,,  for  integer $t\geq 1,$ with its \enquote{line of ancestry}, i.e.\ the string $\sigma \in \Sigma_t:=\{1,2\}^{t+1}$ which is the finite sequence of types of all its ancestors (starting with the root of $\mathcal{T}_{\mathbf{n},\Pi}$ and including itself). Set $\Sigma:=\bigcup_{t\ge 1}\Sigma_t$ and denote by $\Xi^*$  the set of all finite strings over the alphabet $\Xi:=\big\{(1,1),(1,2),(2,1),(2,2)\big\}.$ We consider the injective function $f\colon\Sigma\to\Xi^*$ defined by 
$$
f\big|_{\Sigma_t}\colon \Sigma_t\to \Xi^*,\,\sigma \mapsto \Big(\big(\sigma(0),\sigma(1)\big),\big(\sigma(1),\sigma(2)\big),\dots,\big(\sigma(t-1),\sigma(t)\big)\Big),
$$
 for $t\ge 1$. A string $\tau\in\Xi^*$ is called \emph{admissible} if $\tau\in f(\Sigma)$ and we denote the set of admissible strings by $\Xi^{ad}:=f(\Sigma)$.  Observe that, for every pair $(i,j)\in\{1,2\}^2$, the function $f$ can be seen as a bijection that maps the subset $\Sigma_{i,j}\subset \Sigma$ of lines of ancestry starting with $i$ and ending in $j$ to the subset $\Xi^{ad}_{i,j}\subset\Xi^{ad}$ of admissible strings starting with $(i,1)$ or $(i,2)$ and ending with $(1,j)$ or $(2,j)$. Next we define a function 
 $$
 \tilde{g}\colon\Xi\to \R_{>0},\,(i,j)\mapsto h_{i,j}
 $$
 that canonically extends to a function 
 $$
 g\colon\Xi^*\to \R_{>0},\, \tau\mapsto \prod_{r=0,\dots,t-1}\tilde{g}(\tau(r))
 $$
 and note that the expected number of offspring with a fixed line of ancestry $\sigma\in\Sigma$ is precisely $g(f(\sigma)).$ Hence, for $i\in\{1,2\},$ we obtain
\begin{align*}
\E\left(\left|\mathcal{T}_{\mathbf{n},\Pi}^i\cap V_i\right|\right)&=1+\sum_{\tau\in\Xi^{ad}_{i,i}}g(\tau)=1+G_{i,i}\left(h_{1,1},h_{1,2},h_{2,1},h_{2,2}\right),\\
\E\left(\left|\mathcal{T}_{\mathbf{n},\Pi}^i\cap V_{3-i}\right|\right)&= \sum_{\tau\in\Xi^{ad}_{i,3-i}}g(\tau)=G_{i,3-i}\left(h_{1,1},h_{1,2},h_{2,1},h_{2,2}\right),
\end{align*} 
where, for every pair $(j,j')\in\{1,2\},$ $G_{j,j'}$ is the four-variate ordinary generating function of $\Xi^{ad}_{j,j'}$ (marking the occurrences of $(1,1)$, $(1,2)$, $(2,1)$, and $(2,2)$ respectively). Using the \emph{symbolic method}(e.g.~\cite{FlajoletSedgewickBook}) we compute the closed forms of these generating functions providing
\begin{align*}
\E\left(\left|\mathcal{T}_{\mathbf{n},\Pi}^i\cap V_i\right|\right)=\frac{1-h_{3-i,3-i}}{d}\qquad\text{and}\qquad\E\left(\left|\mathcal{T}_{\mathbf{n},\Pi}^i\cap V_{3-i}\right|\right)=\frac{h_{i,3-i}}{d},
\end{align*} 
for $i\in\{1,2\},$ if the denominator
$$
d=1-h_{1,1}-h_{2,2}+h_{1,1}h_{2,2}-h_{1,2}h_{2,1}
$$
 is positive. We now establish stronger lower bounds, which allow us to prove statement~\eqref{dualExp}. By~\eqref{dualLargestEigenvalue}, for any constant $\delta\in(0,1)$ there exists an integer $n_*=n_*(\delta)$ such that for all $n_1\geq n_2\geq n_*$ we have 
\begin{align*}
\nonumber d&\geq 1-h_{1,1}-h_{2,2}+h_{1,1}h_{2,2}-(1-\varepsilon(1-\delta)-h_{1,1})(1-\varepsilon(1-\delta)-h_{2,2})\\
\nonumber&=\varepsilon(1-\delta)\left(2-h_{1,1}-h_{2,2}-\varepsilon(1-\delta)\right)\\
\nonumber&\geq\varepsilon (1-\delta)(2-(1-\varepsilon(1-\delta))-h_{3-i,3-i}-\varepsilon(1-\delta))\\
&= \varepsilon(1-\delta)(1-h_{3-i,3-i})\\
&\geq\varepsilon(1-\delta)h_{i,3-i}>0,
\end{align*}
and thus, letting $\delta\to 0,$ we obtain
\begin{align*}
\E\left(\left|\mathcal{T}_{\mathbf{n},\Pi}^i\cap V_i\right|\right)&\leq\varepsilon^{-1}\qquad\text{ and }\qquad
\E\left(\left|\mathcal{T}_{\mathbf{n},\Pi}^i\cap V_{3-i}\right|\right)\leq\varepsilon^{-1}.
\end{align*}

As mentioned before, conditioning on $\mathcal{D}_i$ (the event that the primal process $\mathcal{T}_{\mathbf{n},P}^i$ dies out) and applying Markov's inequality we get 
\begin{align*}
&\Prob{\left(\left|\mathcal{T}_{\mathbf{n},P}^i\cap V_1\right|\geq l_1\right.\vee\left.\left|\mathcal{T}_{\mathbf{n},P}^i\cap V_2\right|\geq l_2\right)}
\\&\hspace{1cm}=\Prob\left(\neg\mathcal{D}_i\right)\Prob{\left(\left|\mathcal{T}_{\mathbf{n},P}^i\cap V_1\right|\geq l_1\vee\left.\left|\mathcal{T}_{\mathbf{n},P}^i\cap V_2\right|\geq l_2\right|\neg\mathcal{D}_i\right)}
\\ &\hspace{1.3cm}+\Prob{\left(\mathcal{D}_i\right)\Prob\left(\left|\mathcal{T}_{\mathbf{n},P}^i\cap V_1\right|\geq l_1\vee\left.\left|\mathcal{T}_{\mathbf{n},P}^i\cap V_2\right|\geq l_2\right|\mathcal{D}_i\right)}
\\&\hspace{1cm}=\rho_i+\Prob(\mathcal{D}_i)\Prob{\left(\left|\mathcal{T}_{\mathbf{n},\Pi}^i\cap V_1\right|\geq l_1 \vee\left|\mathcal{T}_{\mathbf{n},\Pi}^i\cap V_2\right|\geq l_2\right)}
\\&\hspace{1cm}\leq\rho_i+\Prob{\left(\left|\mathcal{T}_{\mathbf{n},\Pi}^i\cap V_1\right|\geq l_1\right)}+\Prob{\left(\left|\mathcal{T}_{\mathbf{n},\Pi}^i\cap V_2\right|\geq l_2\right)}
\\&\hspace{1cm}\leq 2\varepsilon+\varepsilon^{-1}l_1^{-1}+\varepsilon^{-1} l_2^{-1}+o(\varepsilon),
\end{align*}
completing the proof.
\end{proof}

\subsection{Width of a tree}\label{widthSection}

The last tool that we need for the proof is the concept of the width of a rooted tree. The \emph{width} $w(\mathcal{T})$ of a rooted tree $\mathcal{T}$ is defined as the supremum of the sizes of all its generations. In this context we will interpret any branching process as a potentially infinite random rooted tree.

\begin{lemma}\label{largeWidth}
Let $\mathcal{T}_{\mathbf{n},P}$ be a $2$-type branching process satisfying the conditions of Theorem~\ref{mainresult} and denote by $\mathcal{D}$ the event that this process dies out. Then for any real function $m$ such that $\varepsilon m\to\infty$ we have \[\Prob\left(\{w\left(\mathcal{T}_{\mathbf{n},P}\right)\geq m\}\cap \mathcal{D}\right)=o(\varepsilon).\]
\end{lemma}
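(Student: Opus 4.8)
The plan is to bound the probability that the process both dies out and has some generation of size at least $m$ by conditioning on the event $\mathcal{D}$ and working with the subcritical dual process $\mathcal{T}_{\mathbf{n},\Pi}$. Write
\[
\Prob\left(\{w(\mathcal{T}_{\mathbf{n},P})\geq m\}\cap\mathcal{D}\right)=\Prob(\mathcal{D})\,\Prob\left(w(\mathcal{T}_{\mathbf{n},P})\geq m\cond\mathcal{D}\right)\leq\Prob\left(w(\mathcal{T}_{\mathbf{n},\Pi})\geq m\right),
\]
using that $\Prob(\mathcal{D})\leq 1$ and that conditioning the primal process on extinction yields exactly the dual process (as established in Subsection~\ref{dualSection}). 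So it suffices to show $\Prob(w(\mathcal{T}_{\mathbf{n},\Pi})\geq m)=o(\varepsilon)$ for the subcritical branching process whose offspring expectation matrix $H$ satisfies $h_{\iota,1}+h_{\iota,2}=1-\varepsilon+o(\varepsilon)$.

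First I would reduce the width to generation sizes. If some generation has size at least $m$, then in particular $\sum_t(Z_t(1)+Z_t(2))\geq m$, i.e.\ the total population of $\mathcal{T}_{\mathbf{n},\Pi}$ is at least $m$ — but that bound is too weak; instead I would directly use $\{w\geq m\}\subseteq\bigcup_{t\geq 0}\{Z_t(1)+Z_t(2)\geq m\}$ and bound each term. The cleaner route, however, is to observe that the event $\{w\geq m\}$ implies the total offspring is at least $m$, and combine this with the fact that the expected total number of offspring of each type is at most $\varepsilon^{-1}$ by Lemma~\ref{dualProcess}. Then Markov's inequality gives $\Prob(|\mathcal{T}_{\mathbf{n},\Pi}|\geq m)\leq 2\varepsilon^{-1}/m=o(\varepsilon)$ provided $\varepsilon^2 m\to\infty$ — but we only assume $\varepsilon m\to\infty$, which is weaker, so Markov on the total size alone does not suffice. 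This is where the genuine work lies.

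The key step, and the main obstacle, is to exploit that a \emph{wide} generation makes extinction afterwards very unlikely. Condition on the first generation $t_0$ with $Z_{t_0}(1)+Z_{t_0}(2)\geq m$; from that point the process is a union of at least $m$ independent copies of dual branching processes (one per individual, each with the appropriate starting type). Let $q_i$ denote the extinction probability of the dual process started from type $i$; since the dual is subcritical, $q_1=q_2=1$, so this crude bound gives nothing. The correct idea is instead the standard width-of-a-subcritical-tree argument: a subcritical branching process with Perron--Frobenius eigenvalue $1-\varepsilon+o(\varepsilon)$ has $\E(Z_t(1)+Z_t(2))=\Theta((1-\varepsilon)^t)$, so by Markov $\Prob(Z_t(1)+Z_t(2)\geq m)\leq C(1-\varepsilon)^t/m$ for a constant $C$. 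Summing the geometric series over all $t\geq 0$ gives
\[
\Prob(w(\mathcal{T}_{\mathbf{n},\Pi})\geq m)\leq\sum_{t\geq 0}\Prob(Z_t(1)+Z_t(2)\geq m)\leq\frac{C}{m}\sum_{t\geq 0}(1-\varepsilon+o(\varepsilon))^t=\frac{C}{m}\cdot\frac{1}{\varepsilon-o(\varepsilon)}=O\!\left(\frac{1}{\varepsilon m}\right),
\]
which is $o(\varepsilon)$? No — it is only $O(1/(\varepsilon m))=o(1)$, not $o(\varepsilon)$. To gain the extra factor of $\varepsilon$ one must use the dual structure more carefully: replace the Markov bound at the \emph{last} wide generation by the observation that conditioning $\mathcal{T}_{\mathbf{n},P}$ on $\mathcal{D}$ already costs a factor $\Prob(\mathcal{D})=1-\rho_1\Prob(\text{root type }1)-\dots$, and more importantly that after the coupling we are really bounding $\Prob(\{w\geq m\}\cap\mathcal{D})$, where the intersection with $\mathcal{D}$ lets us pick up a further $\rho\sim 2\varepsilon$ saving: on the wide generation, the conditional probability that \emph{all} of the $\geq m$ independent subtrees die out is at most $(1-\rho_{\min})^{m}\to 0$ doubly-exponentially fast, so in fact the contribution of genuinely wide generations to $\mathcal{D}$ is exponentially small in $\varepsilon m$, hence $o(\varepsilon)$.

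Putting it together: I would split on the first wide generation $t_0$, bound $\Prob(Z_{t_0}(1)+Z_{t_0}(2)=k)$ for each $k\geq m$ via the first-moment/Markov estimate on the primal process (summed over $t_0$, contributing $O(1/(\varepsilon m))$ as a probability mass on "some generation is wide"), and then multiply by the conditional extinction probability $(1-\Theta(\varepsilon))^{k}\leq(1-\Theta(\varepsilon))^m$ for the remainder of the process to die out. Since $\varepsilon m\to\infty$, we have $(1-\Theta(\varepsilon))^m=\exp(-\Theta(\varepsilon m))=o(\varepsilon^{N})$ for every fixed $N$, which beats the $O(1/(\varepsilon m))$ factor by a wide margin and yields $\Prob(\{w(\mathcal{T}_{\mathbf{n},P})\geq m\}\cap\mathcal{D})=o(\varepsilon)$. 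The delicate point to handle with care is the uniformity of the $\Theta(\varepsilon)$ lower bound on $\min\{\rho_1,\rho_2\}$ (which follows from Lemma~\ref{asympSurvival}, giving $\rho_1\sim\rho_2\sim2\varepsilon$) and making sure the "union over all generations $t_0$" does not reintroduce a divergent factor — it does not, because the geometric decay of $\E(Z_t(1)+Z_t(2))$ keeps $\sum_{t_0}\Prob(\text{generation }t_0\text{ wide})=O(1/(\varepsilon m))$ bounded.
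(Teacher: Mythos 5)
Your key insight --- that a wide generation forces extinction to be unlikely, giving $\Prob(\mathcal{D}\mid\mathcal{W}_m)\leq(1-\rho_{\min})^m\leq\exp(-\Theta(\varepsilon m))$ --- is exactly the mechanism the paper uses, but your quantitative synthesis fails twice. The claim that $\exp(-\Theta(\varepsilon m))=o(\varepsilon^N)$ for every fixed $N$ is false: it would require $\varepsilon m\gg|\log\varepsilon|$, whereas the hypothesis $\varepsilon m\to\infty$ is compatible with, say, $\varepsilon m=\sqrt{|\log\varepsilon|}$, in which case $\exp(-\Theta(\varepsilon m))$ is not even $o(\varepsilon)$. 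Likewise the $O(1/(\varepsilon m))$ bound you cite for $\Prob(\mathcal{W}_m)$ is wrong for the primal process, which is supercritical: its generation sizes grow in expectation and on the survival event it eventually develops a wide generation, so $\Prob(\mathcal{W}_m)\geq\Prob(\neg\mathcal{D})=\Theta(\varepsilon)$, which dominates $1/(\varepsilon m)$ once $\varepsilon^2 m\to\infty$. The geometric-series bound $\sum_t(1-\varepsilon)^t/m=O(1/(\varepsilon m))$ applies only to the subcritical dual, where you have already spent the conditioning on $\mathcal{D}$ and cannot also charge an extinction factor.

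The missing ingredient is a Bayes-type rewriting that lets you spend the survival probability and the extinction factor on \emph{disjoint} parts of the estimate:
\[
\Prob(\mathcal{W}_m\cap\mathcal{D})=\Prob(\mathcal{W}_m\cap\neg\mathcal{D})\cdot\frac{\Prob(\mathcal{D}\mid\mathcal{W}_m)}{\Prob(\neg\mathcal{D}\mid\mathcal{W}_m)}.
\]
Now the crude bound $\Prob(\mathcal{W}_m\cap\neg\mathcal{D})\leq\Prob(\neg\mathcal{D})\sim 2\varepsilon$ supplies the factor of $\varepsilon$ with no width argument at all, and the ratio is $o(1)/(1-o(1))=o(1)$ by precisely your extinction estimate; the product is $o(\varepsilon)$ for every $m$ with $\varepsilon m\to\infty$. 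This is the paper's argument.
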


\begin{proof}
Denote $\mathcal{W}_m=\left\{w\left(\mathcal{T}_{\mathbf{n},P}\right)\geq m\right\}$ and let us construct $\mathcal{T}_{\mathbf{n},P}$ generation by generation and stop as soon as we see the first generation of size at least $m$ if there is one. Then we have $m_1$ vertices of type $1$ and $m_2$ vertices of type $2$ where $m_1+m_2\ge m$. Since each of the vertices of this generation starts an independent copy of $\mathcal{T}_{\mathbf{n},P}^1$ (respectively $\mathcal{T}_{\mathbf{n},P}^2$) we get for the probability of dying out given that $\mathcal{W}_m$ holds
\begin{equation*}
\Prob(\mathcal{D}|\mathcal{W}_m)=(1-\rho_1)^{m_1}(1-\rho_2)^{m_2}\leq e^{-(\rho_1 m_1+\rho_2 m_2)}=O(\exp(-2\varepsilon m)) =o(1),
\end{equation*}
where the asymptotic statements hold due to Lemma~\ref{asympSurvival} and since $\varepsilon m\to \infty.$
Hence, we obtain
 $$\Prob(\neg\mathcal{D}|\mathcal{W}_m)=1-o(1),$$ 
 and by the law of conditional probability and Lemma~\ref{asympSurvival} we have
\begin{align*}
\Prob\left(\mathcal{W}_m\wedge \mathcal{D}\right)=\Prob\left(\mathcal{W}_m\wedge\neg \mathcal{D} \right)\cdot\frac{\Prob(\mathcal{D}|\mathcal{W}_m)}{\Prob(\neg\mathcal{D}|\mathcal{W}_m)}
\leq o(\Prob(\neg\mathcal{D}))=o(\varepsilon),
\end{align*}
proving Lemma~\ref{largeWidth}.
\end{proof}

\section{Large components: proof of Theorem~\ref{mainresult}}\label{proof}

The main idea of the proof is to couple the component exploration process in $G(\mathbf{n},P)$ with instances of the $2$-type binomial branching process $\mathcal{T}_{\mathbf{n},P}$\,. Given a vertex $v$ of type $i$ in $G(\mathbf{n},P)$ we denote its component by $\mathcal{C}_v$. Furthermore let $\mathcal{T}_v$ be the random spanning-tree rooted at $v$ constructed by exploring new neighbours in $\mathcal{C}_v$ via a breadth-first search. Again we interpret branching processes as potentially infinite random rooted trees.
\begin{lemma}\label{coupling}
Given any vector $\mathbf{n}\in \N^2$, any symmetric matrix $P\in[0,1]^{2\times 2},$ and any vertex $v$ of type $i\in\{1,2\},$ the following two statements hold.
\begin{enumerate}[(i)]
\item There is a coupling of the random rooted trees $\mathcal{T}_v$ and $\mathcal{T}_{\mathbf{n},P}^i$ such that $\mathcal{T}_v\subset\mathcal{T}_{\mathbf{n},P}^i$. In particular, $\left|\mathcal{C}_v\cap V_j\right|\leq \left|\mathcal{T}_{\mathbf{n},P}^i\cap V_j\right|,$ for $j\in\{1,2\}.$
\item For any vector $\mathbf{m}=(m_1,m_2)\in \N^2$ satisfying $\mathbf{m}\leq \mathbf{n}$, there is a coupling of the random rooted trees $\mathcal{T}_v$ and $\mathcal{T}_{\mathbf{n}-\mathbf{m},P}^i$ such that $\mathcal{T}_{\mathbf{n}-\mathbf{m},P}^i\subset\mathcal{T}_v$ or both trees contain at least $m_1$ vertices of type $1$ or $m_2$ vertices of type $2$. In particular, either $\left|\mathcal{C}_v\cap V_j\right|\geq \left|\mathcal{T}_{\mathbf{n}-\mathbf{m},P}^i\cap V_j\right|$, for $j\in\{1,2\}$, or the total number of vertices of type $r$ in $\mathcal{C}_v$ and $\mathcal{T}_{\mathbf{n}-\mathbf{m},P}^i$ is at least $m_r$ for some $r\in\{1,2\}.$
\end{enumerate}
\end{lemma}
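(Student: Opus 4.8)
The plan is to expose the component $\mathcal{C}_v$ of $v$ by a breadth-first search and to run the branching process $\mathcal{T}_{\mathbf{n},P}^i$ (respectively $\mathcal{T}_{\mathbf{n}-\mathbf{m},P}^i$) in lockstep with it, generation by generation, coupling the two so that the relevant inclusion of rooted trees holds at every step. For part (i), I would maintain a BFS queue of discovered vertices together with the set $A$ of vertices already \emph{used up} (dequeued or still pending). When a vertex $u$ of type $i'$ is processed, its children in $\mathcal{T}_v$ are exactly the not-yet-discovered vertices adjacent to $u$; for each type $j$ the number of such potential new neighbours is $n_j$ minus the number of type-$j$ vertices currently in $A$, hence at most $n_j$, and each is present independently with probability $p_{i',j}$. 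In the branching process the corresponding vertex $u$ produces $\mathrm{Bin}(n_j,p_{i',j})$ children of type $j$ independently. Since a $\mathrm{Bin}(n_j',p)$ variable with $n_j'\le n_j$ is stochastically dominated by a $\mathrm{Bin}(n_j,p)$ variable, and since the $u$-edges are independent of everything exposed so far in the BFS, I can realise the BFS offspring of $u$ as a subset of the branching-process offspring of $u$ on a common probability space. Carrying this out simultaneously for all vertices of a generation and then iterating yields $\mathcal{T}_v\subseteq\mathcal{T}_{\mathbf{n},P}^i$, and the size inequalities $|\mathcal{C}_v\cap V_j|\le|\mathcal{T}_{\mathbf{n},P}^i\cap V_j|$ follow immediately since $\mathcal{C}_v$ is spanned by $\mathcal{T}_v$.

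For part (ii) I would couple in the opposite direction, using $\mathbf{n}-\mathbf{m}$ in place of $\mathbf{n}$ so that the branching process is the \emph{smaller} object. Run the BFS on $\mathcal{C}_v$ and the process $\mathcal{T}_{\mathbf{n}-\mathbf{m},P}^i$ together, and stop the coupling the moment the explored part of $\mathcal{C}_v$ (or of the process) has accumulated at least $m_1$ vertices of type $1$ or at least $m_2$ vertices of type $2$ --- in that event the second alternative of the statement holds and we are done, so we may assume it never occurs. As long as it does not occur, when processing a vertex $u$ of type $i'$ the number of type-$j$ vertices in the \emph{used} set $A$ is strictly less than $m_j$, so the number of genuinely available new neighbours of type $j$ is more than $n_j-m_j$, i.e.\ at least $n_j-m_j+1>n_j-m_j$ potential type-$j$ children, each present independently with probability $p_{i',j}$. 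This binomial (with parameter $>n_j-m_j$) stochastically dominates $\mathrm{Bin}(n_j-m_j,p_{i',j})$, the offspring distribution in $\mathcal{T}_{\mathbf{n}-\mathbf{m},P}^i$, so on a common space the process offspring of $u$ can be taken to be a subset of the BFS offspring of $u$. Iterating gives $\mathcal{T}_{\mathbf{n}-\mathbf{m},P}^i\subseteq\mathcal{T}_v$ on the event that the stopping condition is never met, which is exactly the dichotomy claimed, and the size inequalities follow as before.

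The main subtlety --- and the step I would be most careful about --- is the bookkeeping of the ``used'' set $A$ in part (ii): one must check that every vertex that has been removed from circulation as a potential neighbour (whether because it was already placed in the tree, or because it sits on the BFS queue, or because the matching process vertex was already generated) is accounted for, and that the count of such type-$j$ vertices never reaches $m_j$ on the good event, so that the domination $\mathrm{Bin}(n_j-|A\cap V_j|,p_{i',j})\succeq\mathrm{Bin}(n_j-m_j,p_{i',j})$ is genuinely available at \emph{every} exposure step and not merely at the start. Once the stopping rule and the definition of $A$ are set up cleanly, the stochastic-domination and independence ingredients are entirely standard (exposing one vertex's edges at a time keeps everything independent of the past), so I would keep those parts brief and devote the writing to the generation-by-generation construction of the coupling.
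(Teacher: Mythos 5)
Part (i) of your argument is the paper's argument rephrased. The paper pads the breadth-first search with fictional vertices so that a vertex of type $j'$ always sees exactly $n_j$ potential type-$j$ children, whereas you invoke the stochastic domination $\mathrm{Bin}(n_j-|A\cap V_j|,p_{j',j})\preceq\mathrm{Bin}(n_j,p_{j',j})$ directly; these are the same monotone coupling.

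For part (ii), the bookkeeping you single out as the main subtlety is exactly where your version breaks. If you run the \emph{full} BFS on $\mathcal{C}_v$ and let $A$ be the set of vertices it has discovered, then $|A\cap V_j|$ is the number of type-$j$ vertices in the BFS tree, and this count can reach $m_j$ long before the coupled process tree $\mathcal{T}_{\mathbf{n}-\mathbf{m},P}^i$ has $m_j$ vertices of type $j$: at every step the process offspring of the current vertex form a (generally strict) subset of its BFS offspring, so the BFS tree always runs ahead. Once $|A\cap V_j|>m_j$ the BFS offspring law $\mathrm{Bin}(n_j-|A\cap V_j|,p_{j',j})$ is stochastically \emph{smaller} than the process offspring law $\mathrm{Bin}(n_j-m_j,p_{j',j})$, so the subset coupling cannot be continued; and at that moment neither alternative of the lemma is secured, since $\mathcal{T}_{\mathbf{n}-\mathbf{m},P}^i$ is no longer guaranteed to sit inside $\mathcal{T}_v$ and need never reach $m_j$ vertices of type $j$. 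The paper avoids this by \emph{not} running the full BFS: when processing a vertex it designates exactly $n_j-m_j$ candidate vertices of type $j$ among those not yet used, reveals only the edges to those candidates, and ignores every other pair. The explored tree is then identical to $\mathcal{T}_{\mathbf{n}-\mathbf{m},P}^i$ for as long as candidates are available, and the stopping rule is imposed on this single common object; when it fires, both $\mathcal{T}_{\mathbf{n}-\mathbf{m},P}^i$ and (since the explored tree lives in $\mathcal{C}_v$) $\mathcal{T}_v$ have at least $m_j$ vertices of type $j$, giving the second alternative as stated. Your weaker dichotomy (subset relation, or $\mathcal{C}_v$ has $\ge m_j$ vertices of type $j$) does still suffice for the one place part (ii) is applied, namely Lemma~\ref{ExpectationLowerBound}, but to prove the lemma as written you should replace the full BFS by the candidate-selection exploration.
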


\begin{proof}
For the first statement, we generate $\mathcal{T}_v$ and $\mathcal{T}_{\mathbf{n},P}^i$ simultaneously, restoring the set of potential neighbours in the breadth-first search by adding fictional vertices of the same type to the vertex set of $G(\mathbf{n},P)$ for each vertex already added as a neighbour. Any offspring of a fictional vertex is automatically fictional. In this way, for each type $j\in\{1,2\}$, we always have $n_j$ potential new neighbours of this type each chosen independently with probability $p_{j',j}$ according to the type $j'\in\{1,2\}$ of the current vertex. After removal of all fictional vertices from $\mathcal{T}_{\mathbf{n},P}^i$ we obtain $\mathcal{T}_v$. Therefore,  we have $\left(\mathcal{T}_v\cap V_j\right)\subset\left(\mathcal{T}_{\mathbf{n},P}^i\cap V_j\right)$  and since $\left|\mathcal{C}_v\cap V_j\right|=\left|\mathcal{T}_v\cap V_j\right|$ the first statement holds.

For the second statement we proceed as before with the slight change that in each step we choose for any type $j\in\{1,2\}$ exactly $n_j-m_j$ neighbours from all possible new neighbours of type $j$ and only add those independently with probability $p_{j',j},$ where $j'\in\{1,2\}$ is the type of the current vertex, and ignore all other vertices. Until we have encountered a total of at least $m_{r}$ vertices of type $r$ in $\mathcal{T}_{\mathbf{n}-\mathbf{m},P}^i$ for some $r\in\{1,2\}$ there are always enough vertices of each type to choose from. Assuming that this does not happen for any $r\in\{1,2\}$, we thus have $\left(\mathcal{T}_{\mathbf{n}-\mathbf{m},P}^i\cap 
V_j\right)\subset\left(\mathcal{T}_v\cap V_j\right)\subset\left(\mathcal{C}_v\cap V_j\right)$ and the claim follows.
\end{proof}

Using Lemma~\ref{dualProcess} and Lemma~\ref{coupling} we can now establish the expectation of the number of vertices in \enquote{large} components. For any type $i\in\{1,2\},$ we denote by $S_{i,\mathbf{L}}=S_{i,\mathbf{L}}\left(G(\mathbf{n},P)\right)$ the set of all vertices of type $i$ in components that contain at least $l_j$ vertices of type $j,$ for some $j\in\{1,2\}$ and a properly chosen pair  $\mathbf{L}=(l_1,l_2)$ of real functions. Moreover, we denote by $s_{i,\mathbf{L}}=\left|S_{i,\mathbf{L}}\right|$ the cardinality of this set.

\begin{lemma}\label{ExpectationUpperBound}
 Let $l_j$ be a real function satisfying $\varepsilon^2 l_j\to\infty,$ for $j\in\{1,2\}.$ Then \[\E\left(s_{i,\mathbf{L}}\right)\leq (2+o(1))\varepsilon n_i, \text{ for } i\in \{1,2\}.\]
\end{lemma}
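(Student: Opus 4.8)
The plan is to bound $\E(s_{i,\mathbf{L}})$ by a sum over vertices $v$ of type $i$ of the probability that $v$ lies in a ``large'' component, and then to use the coupling of Lemma~\ref{coupling}(i) together with the dual-process estimate of Lemma~\ref{dualProcess} to control each such probability. More precisely, writing $\event{v}$ for the event that the component $\mathcal{C}_v$ of $v$ contains at least $l_j$ vertices of type $j$ for some $j\in\{1,2\}$, we have by linearity of expectation
\[
\E\left(s_{i,\mathbf{L}}\right)=\sum_{v\in V_i}\Prob\left(\event{v}\right).
\]
First I would observe that $\event{v}$ is precisely the event $\{|\mathcal{C}_v\cap V_1|\geq l_1\}\cup\{|\mathcal{C}_v\cap V_2|\geq l_2\}$, so by Lemma~\ref{coupling}(i), which gives $|\mathcal{C}_v\cap V_j|\leq|\mathcal{T}_{\mathbf{n},P}^i\cap V_j|$ under a suitable coupling, we get
\[
\Prob\left(\event{v}\right)\leq\Prob\left(\left|\mathcal{T}_{\mathbf{n},P}^i\cap V_1\right|\geq l_1\vee\left|\mathcal{T}_{\mathbf{n},P}^i\cap V_2\right|\geq l_2\right).
\]

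Next I would invoke Lemma~\ref{dualProcess}: since $\varepsilon^2 l_j\to\infty$ for $j\in\{1,2\}$ by hypothesis, estimate~\eqref{dualProbEst} applies and yields that the right-hand side above is at most $(2+o(1))\varepsilon$, uniformly in the starting vertex $v$ of type $i$ (the bound depends only on the type $i$ and the parameters, not on which vertex of type $i$ we picked). Summing over the $n_i$ vertices of type $i$ then gives
\[
\E\left(s_{i,\mathbf{L}}\right)=\sum_{v\in V_i}\Prob\left(\event{v}\right)\leq n_i\cdot(2+o(1))\varepsilon=(2+o(1))\varepsilon n_i,
\]
which is exactly the claimed bound. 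Note the direction of the coupling matters here: for the \emph{upper} bound on the expected number of vertices in large components we want to \emph{over}estimate component sizes, and Lemma~\ref{coupling}(i) (the branching process dominates the exploration tree) is the correct tool; the lower-bound coupling of Lemma~\ref{coupling}(ii) would be used for the matching lower bound, which is not needed for this lemma.

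There is essentially no serious obstacle in this proof — it is a direct concatenation of the coupling lemma and the dual-process lemma via linearity of expectation and a union bound inside the branching-process probability. The only point requiring a modicum of care is the \emph{uniformity} of the $o(\varepsilon)$ error term in~\eqref{dualProbEst} over all vertices of a given type: this holds because $\mathcal{T}_{\mathbf{n},P}^i$ depends only on the starting type $i$ and not on the identity of $v$, so the bound is genuinely a single estimate applied $n_i$ times rather than $n_i$ separate estimates whose errors could accumulate. One should also remark that the hypothesis $\varepsilon^2 l_j\to\infty$ is exactly what is needed to pass from the weaker bound $2\varepsilon+\varepsilon^{-1}l_1^{-1}+\varepsilon^{-1}l_2^{-1}+o(\varepsilon)$ in Lemma~\ref{dualProcess} to the clean $(2+o(1))\varepsilon$, since $\varepsilon^{-1}l_j^{-1}=(\varepsilon^2 l_j)^{-1}\varepsilon^{-1}\cdot\varepsilon^2=o(\varepsilon)$ precisely when $\varepsilon^2 l_j\to\infty$.
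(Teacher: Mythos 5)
Your proof is correct and is essentially identical to the paper's argument: both combine linearity of expectation, the coupling of Lemma~\ref{coupling}(i) to dominate $\left|\mathcal{C}_v\cap V_j\right|$ by $\left|\mathcal{T}_{\mathbf{n},P}^i\cap V_j\right|$, and the bound~\eqref{dualProbEst} from Lemma~\ref{dualProcess} (valid since $\varepsilon^2 l_j\to\infty$). The additional remarks about uniformity in $v$ and about why $\varepsilon^{-1}l_j^{-1}=o(\varepsilon)$ are accurate but not a departure from the paper's route.
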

\begin{proof}
For $i\in\{1,2\},$ by Lemma~\ref{coupling}(i) and linearity of expectation, we have \begin{align*}
\E\left(s_{i,\mathbf{L}}\right)&=\sum_{v\in V_i}\Prob\left(\left|\mathcal{C}_v\cap V_1\right|\geq l_1\vee\left|\mathcal{C}_v\cap V_2\right|\geq l_2\right)
\\&\leq n_i\Prob\left(\left|\mathcal{T}_{\mathbf{n},P}^i\cap V_1\right|\geq l_1\vee\left|\mathcal{T}_{\mathbf{n},P}^i\cap V_2\right|\geq l_2\right)\sim 2\varepsilon n_i,
\end{align*}
where the last step holds by equation~\eqref{dualProbEst} in Lemma~\ref{dualProcess}.
\end{proof}

\begin{lemma}\label{ExpectationLowerBound}
Let $l_j$ be a real function satisfying $l_j=o(\varepsilon n_j),$ for $j\!\in\!\{1,2\}.$ Then \[\E\left(s_{i,\mathbf{L}}\right)\geq (2+o(1))\varepsilon n_i, \text{ for } i\in \{1,2\}.\]
\end{lemma}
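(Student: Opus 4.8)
The plan is to mirror the strategy of Lemma~\ref{ExpectationUpperBound} but using the \emph{lower} coupling from Lemma~\ref{coupling}(ii) together with the asymptotic survival probabilities from Lemma~\ref{asympSurvival}. First I would fix a small auxiliary function $\mathbf{m}=(m_1,m_2)$, chosen so that $m_j\to\infty$ slowly enough that the \enquote{bad event} in Lemma~\ref{coupling}(ii) --- that both $\mathcal{C}_v$ and $\mathcal{T}_{\mathbf{n}-\mathbf{m},P}^i$ accumulate at least $m_r$ vertices of some type $r$ --- is harmless, yet large enough that the shifted branching process $\mathcal{T}_{\mathbf{n}-\mathbf{m},P}^i$ still satisfies the hypotheses of Theorem~\ref{mainresult} (in particular, subtracting $\mathbf{m}$ from $\mathbf{n}$ perturbs each $\mu_{i,j}$ only by a $o(\varepsilon)$ amount, so $\eqref{sumExpectation}$ and $\eqref{asymptoticalCondition}$ are preserved). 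For instance $m_j=\varepsilon^{-2}\log(1/\varepsilon)$ or any $m_j$ with $m_j=o(\varepsilon n_j)$ and $\varepsilon m_j\to\infty$ should work.

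The core computation: by Lemma~\ref{coupling}(ii) and linearity of expectation,
\begin{align*}
\E\left(s_{i,\mathbf{L}}\right)&=\sum_{v\in V_i}\Prob\left(\left|\mathcal{C}_v\cap V_1\right|\geq l_1\vee\left|\mathcal{C}_v\cap V_2\right|\geq l_2\right)\\
&\geq n_i\left[\Prob\left(\left|\mathcal{T}_{\mathbf{n}-\mathbf{m},P}^i\cap V_1\right|\geq l_1\vee\left|\mathcal{T}_{\mathbf{n}-\mathbf{m},P}^i\cap V_2\right|\geq l_2\right)-\Prob(\text{bad event})\right].
\end{align*}
For the first probability I would argue that it is at least the survival probability $\rho_i$ of $\mathcal{T}_{\mathbf{n}-\mathbf{m},P}^i$ minus a correction: conditioned on survival, the process grows without bound, so whp it eventually has at least $l_j$ vertices of type $j$ provided $l_j=o(\varepsilon n_j)$ is compatible with the typical size of a surviving process. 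More carefully, on survival the width of the tree grows, and one can invoke the width estimate (Lemma~\ref{largeWidth} applied to the complementary event, or a direct first/second-moment argument on the generation sizes) to show that a surviving process reaches $\min\{l_1,l_2\}$ total vertices of the relevant types whp. Since $\rho_i\sim 2\varepsilon$ by Lemma~\ref{asympSurvival} (applied to the perturbed process, whose survival probability is also $(2+o(1))\varepsilon$), this gives the bound $n_i(2\varepsilon - o(\varepsilon)) = (2+o(1))\varepsilon n_i$.

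The main obstacle I anticipate is controlling the event that a surviving instance of $\mathcal{T}_{\mathbf{n}-\mathbf{m},P}^i$ nevertheless fails to reach $l_j$ vertices of type $j$ for the \emph{type with fewer vertices}, i.e.\ type $2$. A surviving process accumulates vertices of type $1$ readily, but type-$2$ vertices appear at rate governed by $\mu_{2,1}$, which may be $o(1)$; one must check that conditioned on survival the process still collects $\gg l_2$ vertices of type $2$, which is exactly where Condition~\eqref{asymptoticalCondition} and the constraint $l_2=o(\varepsilon n_2)$ enter. I would handle this by noting that on survival the type-$1$ population alone eventually exceeds any fixed polynomial in $\varepsilon^{-1}$, and each type-$1$ vertex independently produces a type-$2$ child with probability bounded below in terms of $\mu_{2,1}$; a Borel--Cantelli / union-bound argument over generations then forces the type-$2$ count past $l_2$ whp on survival. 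A secondary, more routine obstacle is verifying that the bad event from Lemma~\ref{coupling}(ii) contributes only $o(\varepsilon)$ per vertex: since $\mathbf{m}$ was chosen with $\varepsilon m_r\to\infty$, the event that a branching process attains width or total size comparable to $m_r$ while eventually dying out is $o(\varepsilon)$ by Lemma~\ref{largeWidth}, and conditioned on survival the lower coupling either succeeds or the bad event is itself the desired conclusion (both trees being large), so in either case the estimate goes through.
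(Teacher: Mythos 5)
The core ideas are right — couple from below using Lemma~\ref{coupling}(ii), then relate to the survival probability from Lemma~\ref{asympSurvival} — but you miss the single simplifying observation that makes the paper's proof a three-line computation, and as a result you manufacture difficulties that do not exist and, more seriously, propose an argument that does not actually close.

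The key trick you do not identify is to take $\mathbf{m}=\mathbf{L}=(l_1,l_2)$. With this choice, the \enquote{bad event} of Lemma~\ref{coupling}(ii) (both trees accumulate at least $m_r=l_r$ vertices of some type $r$) is literally a subcase of the \emph{desired} event $\{|\mathcal{C}_v\cap V_1|\ge l_1 \vee |\mathcal{C}_v\cap V_2|\ge l_2\}$. Hence under this coupling the implication
$$
\left\{|\mathcal{T}^i_{\mathbf{n}-\mathbf{m},P}\cap V_1|\ge l_1 \vee |\mathcal{T}^i_{\mathbf{n}-\mathbf{m},P}\cap V_2|\ge l_2\right\} \implies \left\{|\mathcal{C}_v\cap V_1|\ge l_1 \vee |\mathcal{C}_v\cap V_2|\ge l_2\right\}
$$
holds \emph{deterministically}: either $\mathcal{T}^i\subset\mathcal{T}_v$ (subtree inclusion) or the bad event fires and directly gives $|\mathcal{C}_v\cap V_r|\ge l_r$. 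There is no $\Prob(\text{bad event})$ to subtract. Since $l_j=o(\varepsilon n_j)$, this choice also keeps the shifted branching process within the hypotheses of Theorem~\ref{mainresult}, as you correctly note one must check. Your suggested $m_j=\varepsilon^{-2}\log(1/\varepsilon)$, or more generally any $\mathbf{m}$ not componentwise at least $\mathbf{L}$, does \emph{not} work: when $m_r<l_r$ the bad event need not imply the desired conclusion, so you must genuinely subtract $\Prob(\text{bad event})$ — but the bad event contains the survival event (a surviving process has infinite population and therefore reaches any finite threshold), so $\Prob(\text{bad event})\ge\Prob(\text{survival})\sim 2\varepsilon$, and the subtraction wipes out the entire lower bound. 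You gesture at the correct observation (\enquote{the bad event is itself the desired conclusion}) at the very end, but it is inconsistent with the subtraction you wrote, and it only holds once you commit to $\mathbf{m}\ge\mathbf{L}$.

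The second issue is the worry about the type-$2$ count on survival. The event defining $s_{i,\mathbf{L}}$ is a \emph{disjunction}: the component needs at least $l_j$ vertices of \emph{some} type $j$, not of both. Survival of $\mathcal{T}^i_{\mathbf{n}-\mathbf{m},P}$ means the total population is infinite, hence at least one of $|\mathcal{T}^i\cap V_1|$, $|\mathcal{T}^i\cap V_2|$ is infinite, hence exceeds the corresponding $l_j$. No Borel--Cantelli argument, no appeal to Lemma~\ref{largeWidth}, and no analysis of the type-$2$ production rate via $\mu_{2,1}$ is needed here; Condition~\eqref{asymptoticalCondition} plays no role at this step. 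With $\mathbf{m}=\mathbf{L}$, the lemma reduces to: $\E(s_{i,\mathbf{L}})\ge n_i\Prob(\mathcal{T}^i_{\mathbf{n}-\mathbf{m},P}\text{ has }\ge l_j\text{ of some type})\ge n_i\Prob(\mathcal{T}^i_{\mathbf{n}-\mathbf{m},P}\text{ survives})\sim 2\varepsilon n_i$, the last step by Lemma~\ref{asympSurvival}.
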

\begin{proof}
We apply Lemma~\ref{coupling}(ii) with $\mathbf{m}=\mathbf{L}=(l_1,l_2)$, since $l_j=o(\varepsilon n_j)$, for $j\in\{1,2\},$ and note that the parameters of the coupling branching process satisfy Conditions~\eqref{asymptoticalCondition}~and~\eqref{sumExpectation}. Hence, for $i\in\{1,2\},$  this yields by linearity of expectation
\begin{align*}
\E\left(s_{i,\mathbf{L}}\right)&=\sum_{v\in V_i}\Prob\left(\left|\mathcal{C}_v\cap V_1\right|\geq l_1\vee\left|\mathcal{C}_v\cap V_2\right|\geq l_2\right)
\\&\geq n_i\Prob\left(\left|\mathcal{T}_{\mathbf{n}-\mathbf{m},P}^i\cap V_1\right|\geq l_1\vee\left|\mathcal{T}_{\mathbf{n}-\mathbf{m},P}^i\cap V_2\right|\geq l_2\right)
\\&\geq n_i\Prob\left(\mathcal{T}_{\mathbf{n}-\mathbf{m},P}^i \text{ survives}\right)\sim 2\varepsilon n_i,
\end{align*}
where the last step holds due to Lemma~\ref{asympSurvival}.
\end{proof}

In the next lemma we will show that $s_{i,\mathbf{L}}\left(G(\mathbf{n},P)\right)$, i.e.\ the number of vertices of type $i$ in large components, is concentrated around its expectation.
\begin{lemma}\label{Concentration}
Let $l_j$ be a real function satisfying $\varepsilon^2 l_j\to\infty$ and $l_j=o(\varepsilon n_j)$, for $j\in\{1,2\}.$ Then whp \[s_{i,\mathbf{L}}\left(G(\mathbf{n},P)\right)=(2+o(1))\varepsilon n_i, \text{ for } i\in\{1,2\}.\]
\end{lemma}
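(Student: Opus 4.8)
The plan is to prove concentration via Chebyshev's inequality, so after Lemmas~\ref{ExpectationUpperBound} and~\ref{ExpectationLowerBound} establish $\E(s_{i,\mathbf{L}})=(2+o(1))\varepsilon n_i$, it suffices to show $\V(s_{i,\mathbf{L}})=o(\varepsilon^2 n_i^2)$. Writing $s_{i,\mathbf{L}}=\sum_{v\in V_i}\Ind_v$ where $\Ind_v$ is the indicator that $\mathcal{C}_v$ is \enquote{large} (i.e.\ contains at least $l_1$ vertices of type $1$ or at least $l_2$ of type $2$), we expand
\[
\E\left(s_{i,\mathbf{L}}^2\right)=\sum_{u,v\in V_i}\Prob\left(\Ind_u=\Ind_v=1\right).
\]
The diagonal terms $u=v$ contribute only $\E(s_{i,\mathbf{L}})=O(\varepsilon n_i)=o(\varepsilon^2n_i^2)$ and are harmless. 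For $u\neq v$ we split according to whether $u$ and $v$ lie in the same component. The crucial estimate is a refined two-round coupling of the breadth-first exploration: first explore $\mathcal{C}_u$ and couple it from above with a copy of $\mathcal{T}_{\mathbf{n},P}^i$; conditioned on $\mathcal{C}_u$ being large but $v\notin\mathcal{C}_u$, explore $\mathcal{C}_v$ in the remaining graph and couple it with $\mathcal{T}_{\mathbf{n}',P}^i$ where $\mathbf{n}'=\mathbf{n}-|\mathcal{C}_u|\mathbf{e}$ has been depleted by at most $l_1+l_2$ vertices in total (actually by the full size of $\mathcal{C}_u$, but we will only need to control the case where $\mathcal{C}_u$ is not too large, handling the rare event $\{w(\mathcal{T}_{\mathbf{n},P})\ge m\}\cap\mathcal{D}$ of a large-width dying process separately via Lemma~\ref{largeWidth}). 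The point is that the depleted process still satisfies Conditions~\eqref{asymptoticalCondition} and~\eqref{sumExpectation} with the same $\varepsilon$ up to lower-order corrections, so Lemma~\ref{dualProcess} again bounds $\Prob(\mathcal{C}_v\text{ large}\mid\mathcal{C}_u,\,v\notin\mathcal{C}_u)$ by $(2+o(1))\varepsilon$. Hence
\[
\sum_{\substack{u\neq v\\ v\notin\mathcal{C}_u}}\Prob\left(\Ind_u=\Ind_v=1\right)\leq n_i^2\cdot(2+o(1))\varepsilon\cdot(2+o(1))\varepsilon=(4+o(1))\varepsilon^2n_i^2,
\]
which asymptotically matches $\left(\E(s_{i,\mathbf{L}})\right)^2$.

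It remains to bound the contribution of pairs $u\neq v$ with $v\in\mathcal{C}_u$. Here the two indicators coincide, so this sum equals $\sum_{v}\E\big((|\mathcal{C}_v\cap V_i|-1)\Ind_v\big)\le\E\big(|S_{i,\mathbf{L}}|\cdot L_1\big)$ in spirit; more carefully, we bound it by $\E\big(s_{i,\mathbf{L}}\cdot\max_{v:\Ind_v=1}|\mathcal{C}_v\cap V_i|\big)$. We do not yet have an $o(\varepsilon n_i)$ bound on the largest component size — that is precisely what Theorem~\ref{mainresult} ultimately gives — so instead I would use the crude deterministic bound $|\mathcal{C}_v\cap V_i|\le n_i$ together with a better-than-trivial bound on $\E(s_{i,\mathbf{L}})$ only when it helps; the honest approach is to choose $\mathbf{L}$ with $l_j$ tending to infinity just fast enough (e.g.\ $\varepsilon^2 l_j\to\infty$ but $l_j=o(\varepsilon n_j)$, as hypothesised) and observe that the couplings give $\Prob(\Ind_u=\Ind_v=1,\ v\in\mathcal{C}_u)\le\Prob(|\mathcal{C}_u\cap V_i|\ge\max\{l_i,2\})$-type estimates, which when summed over $u,v$ yield $O(\varepsilon n_i)\cdot o(\varepsilon n_i)$ provided one controls the \emph{size} (not just the occurrence) of large components via the dual process — Lemma~\ref{dualProcess} bounds $\E(|\mathcal{T}_{\mathbf{n},\Pi}^i\cap V_j|)\le\varepsilon^{-1}$, and multiplying by $\Prob(\mathcal{D}^c)=O(\varepsilon)$ gives $\E(|\mathcal{C}_v\cap V_i|\Ind_v)=O(1)$, whence $\sum_{u\neq v,\,v\in\mathcal{C}_u}\Prob(\cdots)=\sum_v\E\big((|\mathcal{C}_v\cap V_i|-1)\Ind_v\big)=O(n_i)=o(\varepsilon^2n_i^2)$ using $\varepsilon^2 n_i\to\infty$ (which follows from Condition~\eqref{asymptoticalCondition}). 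Combining, $\E(s_{i,\mathbf{L}}^2)\le(4+o(1))\varepsilon^2n_i^2=(1+o(1))\left(\E(s_{i,\mathbf{L}})\right)^2$, so $\V(s_{i,\mathbf{L}})=o(\varepsilon^2n_i^2)=o\big(\left(\E(s_{i,\mathbf{L}})\right)^2\big)$ and Chebyshev's inequality finishes the proof.

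The main obstacle is making the second-moment coupling rigorous: one must verify that after exploring $\mathcal{C}_u$ the residual exploration of $\mathcal{C}_v$ is genuinely dominated by a branching process on the \emph{depleted} type counts, that these depleted counts still satisfy~\eqref{asymptoticalCondition} and~\eqref{sumExpectation} (which requires $\mathcal{C}_u$ not to consume too large a fraction of either $V_1$ or $V_2$ — this is where we invoke Lemma~\ref{largeWidth} and the dual-process size bounds to rule out the bad event that a small component is accidentally huge in type $2$), and that the conditioning on $\{v\notin\mathcal{C}_u\}$ only helps. The delicate point is type $2$: since $n_2$ may be much smaller than $n_1$, a single large component could deplete a constant fraction of $V_2$, and one must check via Lemma~\ref{dualProcess} that this happens with probability $o(1)$ relative to the scales involved, so that the depletion is $o(\varepsilon n_2)$ in the dominant contribution. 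Once this coupling bookkeeping is in place, everything else is a routine second-moment computation.
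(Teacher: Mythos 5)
Your high-level strategy (Chebyshev after a second-moment bound) agrees with the paper, but the decomposition into \enquote{different components} and \enquote{same component} breaks down at the second piece, and the error is not repairable within your framework. You claim that $\E\bigl(|\mathcal{C}_v\cap V_i|\,\Ind_v\bigr)=O(1)$ by multiplying $\E\bigl(|\mathcal{T}_{\mathbf{n},\Pi}^i\cap V_i|\bigr)\le\varepsilon^{-1}$ by $\Prob(\mathcal{D}^c)=O(\varepsilon)$. That product corresponds to nothing: the dual-process expectation is $\E\bigl[|\mathcal{T}_{\mathbf{n},P}^i\cap V_i|\cond\mathcal{D}\bigr]$, and it must be weighted by $\Prob(\mathcal{D})\approx 1$, not by $\Prob(\mathcal{D}^c)$. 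Splitting honestly on $\mathcal{D}$ gives
\[
\E\bigl(|\mathcal{C}_v\cap V_i|\,\Ind_v\bigr)\le \E\bigl[|\mathcal{T}_{\mathbf{n},\Pi}^i\cap V_i|\bigr]\cdot\Prob(\mathcal{D})+n_i\cdot\Prob(\neg\mathcal{D})\le \varepsilon^{-1}+(2+o(1))\varepsilon n_i,
\]
and since Condition~\eqref{asymptoticalCondition} forces $\varepsilon^2 n_i\to\infty$, this is $\Theta(\varepsilon n_i)$, not $O(1)$. Summing over $u\in V_i$ gives $\Theta(\varepsilon n_i^2)$ for the \enquote{same component} term, which exceeds the target $o(\varepsilon^2 n_i^2)$ by a factor of $\varepsilon^{-1}\to\infty$. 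The deeper problem is that you cannot expect the same-component contribution to be negligible: if Theorem~\ref{mainresult} holds, that contribution is in fact $\sim 4\varepsilon^2 n_i^2$ (it is precisely the dominant term, coming from pairs of vertices in the giant), while your different-component bound $(4+o(1))\varepsilon^2 n_i^2$ overshoots its true value $o(\varepsilon^2 n_i^2)$. Your two bounds are therefore lossy in exactly offsetting ways, and making either one tight would require already knowing the unique giant exists — a circularity.

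The paper avoids this by never revealing $\mathcal{C}_v$ in full. It runs a \emph{truncated} breadth-first exploration $\mathcal{T}_v'$ that halts as soon as $l_j$ vertices of type $j$ have been reached for some $j$, or as soon as $\varepsilon l_2$ boundary (reached-but-unexplored) vertices accumulate; the event $\mathcal{A}$ that the exploration stops early satisfies $\{\Ind_v=1\}\subset\mathcal{A}$, and $\Prob(\mathcal{A})\le(2+o(1))\varepsilon$ by Lemma~\ref{dualProcess} (for the first stopping rule) and Lemma~\ref{largeWidth} (for the second — this, not depletion control, is where the width lemma enters). Then $\E(s_{i,\mathbf{L}}^2)\le n_i\,\Prob(\mathcal{A})\,\E[s_{i,\mathbf{L}}\mid\mathcal{A}]$, and the conditional expectation is bounded by exploring each $u\notin\mathcal{T}_v'$ inside $G(\mathbf{n},P)\setminus V(\mathcal{T}_v')$ and observing that $\mathcal{C}_u$ can differ from this exploration only via an edge to one of the $O(\varepsilon l_2)$ boundary vertices — an event of probability $o(\varepsilon)$ conditional on the exploration dying out. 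This handles \enquote{same component} and \enquote{different component} pairs in a single stroke, with no circularity. Because $\mathcal{T}_v'$ has only $O(l_1+l_2)=o(\varepsilon n)$ vertices, the depletion issues you worried about never arise. You would need to adopt this truncation-and-boundary mechanism; the same-vs-different decomposition cannot be made to close.
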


\begin{proof}
Lemmas~\ref{ExpectationUpperBound}~and~\ref{ExpectationLowerBound} show that $\E(s_{i,\mathbf{L}})\sim 2\varepsilon n_i$, hence it is sufficient to derive the upper bound 
\begin{equation}\label{SecondMoment}
\E(s_{i,\mathbf{L}}^2)\leq (4+o(1))\varepsilon^2 n_i^2\sim \E\left(s_{i,\mathbf{L}}\right)^2\, \text{ for } i\in\{1,2\}.
\end{equation}
The reason for this is the classical \emph{second moment method} (e.g.~\cite{AlonSpencerBook,JansonLuczakRucinskiBook}): Equation~\eqref{SecondMoment} implies that for the random variable $s_{i,\mathbf{L}}$ the variance is of smaller order than the square of the expectation, i.e. $$\V\left(s_{i,\mathbf{L}}\right)=\E\left(s_{i,\mathbf{L}}^2\right)-\E\left(s_{i,\mathbf{L}}\right)^2\leq o\left(\E\left(s_{i,\mathbf{L}}\right)^2\right),\text{ for }i\in\{1,2\},$$ which provides concentration by Chebyshev's inequality.

Without loss of generality fix a type $i\in\{1,2\}$ for the rest of the proof. Furthermore, fix a vertex $v$ of type $i$ in $G(\mathbf{n},P)$. Once again we explore the component $\mathcal{C}_v$ of that vertex in a breadth-first search generating a tree $\mathcal{T}_v'\subset\mathcal{C}_v$. However, we will stop the exploration immediately, even midway through revealing the neighbours of one particular vertex, if one of the following two events occurs:
\begin{enumerate}[(i)]
\item we have already reached a total of $l_j$ vertices of type $j$ for some $j\in\{1,2\};$
\item there are $\varepsilon l_2$ vertices that have been reached (i.e.\ children of earlier vertices) but not yet fully explored (flipped a coin for each possible neighbour).
\end{enumerate}
Note that for stopping condition~(ii) we do not distinguish the types of vertices. Any vertex that has been reached but not fully explored is called \emph{boundary vertex}. Observe that this process will create at most $\varepsilon l_2+1\leq 2\varepsilon l_2$ boundary vertices. Furthermore, denote by $\mathcal{A}$ the event that the process stops due to~(i)~or~(ii), rather than because it has revealed the whole component $\mathcal{C}_v$. Note that 
\begin{equation}\label{A}
\left\{\left|\mathcal{C}_v\cap V_1\right|\geq l_1 \vee \left|\mathcal{C}_v\cap V_2\right|\geq l_2\right\} \implies \mathcal{A},
\end{equation} a fact that we will use later on. Now we estimate the probability that $\mathcal{A}$ holds: By the coupling in Lemma~\ref{coupling}(i) we may assume that $\mathcal{T}_v'\subset\mathcal{T}_v\subset \mathcal{T}_{\mathbf{n},P}^i$ and, since we proceed in a breadth-first manner, at every point of time all the boundary vertices are contained in at most two consecutive generations. Hence if $\mathcal{A}$ holds, either $\left|\mathcal{T}_{\mathbf{n},P}^i\cap V_j\right|\geq l_j,$ for some $j\in\{1,2\},$ or the total number of offspring of the process $\mathcal{T}_{\mathbf{n},P}^i$ is finite and $w(\mathcal{T}_{\mathbf{n},P}^i)\geq \varepsilon l_2/2$. As calculated in Lemma~\ref{dualProcess} the probability that the first case occurs is asymptotically at most $2\varepsilon,$ while for the second case we calculated in Lemma~\ref{largeWidth} that the probability of having large width but still dying out is $o(\varepsilon)$, hence 
\begin{equation}\label{AProb}
\Prob(\mathcal{A})\leq (2+o(1))\varepsilon.
\end{equation}

 We use this to relate the second moment to the expectation on the conditional probability space, where we condition on $\mathcal{A}$ holding. We replace $s_{i,\mathbf{L}}$ by a sum of indicator random variables and from the implication in~\eqref{A} we get
\begin{align}\label{conditionalSecondMoment}
\nonumber\E\left[s_{i,\mathbf{L}}^2\right]&=\sum_{v\in V_i}\E\left[\Ind_{\left\{\left|\mathcal{C}_v\cap V_1\right|\geq l_1\vee\left|\mathcal{C}_v\cap V_2\right|\geq l_2\right\}}s_{i,\mathbf{L}}\right]
\\&\leq n_i\E\left[\Ind_{\mathcal{A}}s_{i,\mathbf{L}}\right]\nonumber
\\&=n_i\Prob(\left.\mathcal{A})\E\left[s_{i,\mathbf{L}}\right|\mathcal{A}\right]\nonumber
\\&\leq (2+o(1))\varepsilon n_i\E\left[s_{i,\mathbf{L}}\left.\right|\mathcal{A}\right].
\end{align}

For the remainder of this proof we will compute an asymptotic upper bound for the conditional expectation $\E\left[s_{i,\mathbf{L}}\left.\right|\mathcal{A}\right].$ Now for any vertex $u\not\in \mathcal{T}_v'$ of type $i$ we reveal its component as before in a breadth-first manner but ignore any vertices that are in $\mathcal{T}_v'$, i.e.\ we explore in $G'=G(\mathbf{n},P)\backslash V(\mathcal{T}_v')$ until we have revealed the whole component in this subgraph. Moreover, we couple the generated tree $\mathcal{T}_u''$ with $\mathcal{T}_{\mathbf{n},P}^i$ such that $\mathcal{T}_u''\subset \mathcal{T}_{\mathbf{n},P}^i$. We denote by $\mathcal{D}_i$ the event that this instance of $\mathcal{T}_{\mathbf{n},P}^i$ dies out and note, in particular, that $\mathcal{D}_i$ is independent of the event $\mathcal{A}$, hence 
\begin{equation} \label{ProbSecExpl}
\Prob\left(\left.\neg \mathcal{D}_i\right|\mathcal{A}\right)=\Prob(\neg\mathcal{D}_i)=(2+o(1))\varepsilon
\end{equation} by Lemma~\ref{asympSurvival}. Let us observe that $\left|\mathcal{T}_u''\right|\leq \left|\mathcal{C}_u\right|$ and furthermore that equality holds unless $G(\mathbf{n},P)$ contains an edge connecting a boundary vertex to a vertex of $\mathcal{T}_u''$. Therefore, for any given $r\in\N,$ we have 
\begin{equation*}
\Prob{\left(\left|\mathcal{C}_u\right|\neq\left|\mathcal{T}_u''\right|\mid\mathcal{D}_i\wedge\mathcal{A}\wedge\left\{\left|\mathcal{T}_u''\right|=r \right\}\right)}\leq 2\varepsilon l_2 r \max\left\{p_{j,j'}\mid j,j'\in\{1,2\}\right\} ,
\end{equation*}
by the union bound, as there are at most $2\varepsilon l_2$ boundary vertices. Note that by~\eqref{sumExpectation} we have $$\max\left\{p_{j,j'}\mid j,j'\in\{1,2\}\right\}\leq (1+\varepsilon)n_2^{-1}\leq 2 n_2^{-1}.$$ Hence, by the law of total probability, 
\begin{align*}\label{boundaryHitProb}
\Prob{\left(\left|\mathcal{C}_u\right|\neq\left|\mathcal{T}_u''\right|\mid\mathcal{D}_i\wedge\mathcal{A}\right)}\leq 4\varepsilon l_2 n_2^{-1} {\E\left[\left|\mathcal{T}_u''\right| \mid\mathcal{D}_i\right]}.
\end{align*} In order to simplify notation we will write $$\mathcal{X}_{u,\mathbf{L}}=\left\{\left|\mathcal{C}_u\cap V_1\right|\geq l_1\vee\left|\mathcal{C}_u\cap V_2 \right|\geq l_2\right\}$$ for the event that the component of $u$ is large. Hence it follows that
\begin{align*}
\Prob{\left(\mathcal{X}_{u,\mathbf{L}}\mid\mathcal{A}\right)}\leq\Prob(\neg\mathcal{D}_i)&+\Prob(\mathcal{D}_i)\Prob{\left(\mathcal{X}_{u,\mathbf{L}}\mid\mathcal{D}_i\wedge\mathcal{A}\right)}
\\\leq\Prob(\neg\mathcal{D}_i)&+\Prob{\left(\mathcal{X}_{u,\mathbf{L}}\wedge \left\{\left|\mathcal{C}_u\right|=\left|\mathcal{T}_u''\right|\right\}\mid\mathcal{D}_i\wedge\mathcal{A}\right)}
\\&+\Prob{\left(\mathcal{X}_{u,\mathbf{L}}\wedge \left\{\left|\mathcal{C}_u\right|\neq\left|\mathcal{T}_u''\right|\right\}\mid\mathcal{D}_i\wedge\mathcal{A}\right)}
\\\leq\Prob(\neg\mathcal{D}_i)&+\Prob{\left(\left|\mathcal{T}_u''\cap V_1\right|\geq l_1\vee \left|\mathcal{T}_u''\cap V_2\right|\geq l_2\mid\mathcal{D}_i\right)}
\\&+\Prob{\left(\left|\mathcal{C}_u\right|\neq\left|\mathcal{T}_u''\right|\mid\mathcal{D}_i\wedge\mathcal{A}\right)}
\\\leq\Prob(\neg\mathcal{D}_i)&+\Prob{\left(\left|\mathcal{T}_{\mathbf{n},\Pi}^i\cap V_1\right|\geq l_1\right)}+\Prob{\left(\left|\mathcal{T}_{\mathbf{n},\Pi}^i\cap V_2\right|\geq l_2\right)}
\\&+4\varepsilon l_2 n_2^{-1} {\E\left[\left|\mathcal{T}_u''\right| \mid\mathcal{D}_i\right]}
\\\leq\Prob(\neg\mathcal{D}_i)&+l_1^{-1}\E{\left(\left|\mathcal{T}_{\mathbf{n},\Pi}^i\cap V_1\right|\right)}+l_2^{-1}\E{\left(\left|\mathcal{T}_{\mathbf{n},\Pi}^i\cap V_2\right|\right)}
\\&+4\varepsilon l_2 n_2^{-1}\E\left(\left|\mathcal{T}_{\mathbf{n},\Pi}^i\right|\right),
\end{align*}
where the last step holds due to Markov's inequality. Furthermore, we know that these expectations are all of order $O\left(\varepsilon^{-1}\right)$ by the bound~\eqref{dualExp} in Lemma~\ref{dualProcess}. Additionally, by our assumptions on $\mathbf{L}$, the coefficients $l_1^{-1},$ $l_2^{-1}$ and $4\varepsilon l_2 n_2^{-1}$ are all of order $o(\varepsilon)$ and therefore using~\eqref{ProbSecExpl} we get $$\Prob{\left(\mathcal{X}_{u,\mathbf{L}}\mid\mathcal{A}\right)}\leq(2+o(1))\varepsilon. $$
Thus, since there are at least $n_i-l_i$ vertices of type $i$ for which we can apply this bound, we get \[\E\left[s_{i,\mathbf{L}}\mid\mathcal{A}\right]\leq l_i+ (n_i-l_i)\Prob{\left(\mathcal{X}_{u,\mathbf{L}}\mid\mathcal{A}\right)}\leq l_i + (2+o(1))\varepsilon n_i=(2+o(1))\varepsilon n_i.\] Inserting this into inequality~\eqref{conditionalSecondMoment} and then applying Chebyshev's inequality completes the proof of Lemma~\ref{Concentration}.
\end{proof}

Now we can prove Theorem~\ref{mainresult}.

\begin{proof}[Proof of Theorem~\ref{mainresult}.]
Let us first introduce some further notation. We write 
\begin{align}
\nonumber\alpha&=\min\{1,\varepsilon^{-1}\mu_{2,1}\},\\
\omega&=\alpha\varepsilon^3 n_2\label{omega}
\end{align} and note that $\alpha>0$ and $\omega\to \infty$ by the assumptions of Theorem~\ref{mainresult}. We set 
\begin{equation}\label{asympL1}
l_j=\frac{\varepsilon n_j}{\log \omega}=o(\varepsilon n_j), \text{ for } j\in\{1,2\},
\end{equation} and note that we could replace $\log \omega$ by any function $\hat{\omega}$ such that $\hat{\omega}\to\infty$ but growing very slowly compared to $\omega$. Moreover, observe that since $\alpha\le 1$ we have
 \begin{equation}\label{asympL2}
\varepsilon^2 l_j=\frac{\varepsilon^3 n_j}{\log \omega}\ge\frac{\omega}{\log \omega}\to \infty, \text{ for } j\in\{1,2\}.
\end{equation} 

Essentially, we know so far that the random graph $G(\mathbf{n},P)$ satisfying the conditions of Theorem~\ref{mainresult}, contains the \enquote{right} number of vertices in large components. It only remains to show that all these components are connected, and thus form a single component, if we \enquote{sprinkle} some more edges. Formally this can be done as follows. We define a symmetric probability matrix $P^b$ by setting
$$
p_{1,2}^b=\frac{\alpha\varepsilon}{n_1\log \omega}=\min\left\{\frac{\varepsilon}{n_1\log \omega}, \frac{p_{1,2}}{\log \omega}\right\},
$$ 
and $p_{1,1}^b=p_{2,2}^b=0.$ Then let $P^a$ be the symmetric probability matrix whose entries satisfy 
$$
p_{1,2}^a+p_{1,2}^b-p_{1,2}^a p_{1,2}^b=p_{1,2}
$$
 and   
$$
p_{i,i}^a=p_{i,i}\text{, for } i\in\{1,2\}.
$$
We construct $G(n,P^a)$ and $G(n,P^b)$ independently and couple them in such a way that we have $$G(\mathbf{n},P^a)\cup G(\mathbf{n},P^b)= G(\mathbf{n},P).$$

Since $p_{1,2}^b\le p_{1,2}/\log\omega$ we have $p_{1,2}^a\ge p_{1,2}(1-1/\log\omega)$ implying that the entries of $P^a$ are all positive for large enough $n_1$ and $n_2\, .$ Furthermore, as $p_{1,2}^b=o( \varepsilon/n_1),$ we have 
$$
p_{1,2}^an_i=\mu_{3-i,i}+o(\varepsilon)\text{, for }i\in\{1,2\}
$$
 and therefore $G(n,P^a)$ also meets all requirements of Theorem~\ref{mainresult}. Moreover, we have calculated in~\eqref{asympL1}~ and~\eqref{asympL2} that the further conditions of Lemma~\ref{Concentration} are also satisfied for $\mathbf{L}=(l_1,l_2).$ Let us denote by  $S_{i,\mathbf{L}}^a=S_{i,\mathbf{L}}\left(G\left(\mathbf{n},P^a\right)\right)$ the set of vertices of type $i\in\{1,2\}$ in large components of $G(\mathbf{n},P^a)$, i.e. components containing at least $l_j$ vertices of type $j$ for some $j\in\{1,2\}.$ Then, by Lemma~\ref{Concentration}, we have $whp$ $$\left|S_{i,\mathbf{L}}^a\right|=2\varepsilon n_i +\zeta_i^a$$
  for some real function $\zeta_i^a=o(\varepsilon n_i).$ We assume that this event holds.
 
  Let $\mathcal{U}$ denote the set of all large components in $G(\mathbf{n},P^a)$. Then for any component $\mathcal{C}\in\mathcal{U}$ we say that the type $j\in\{1,2\}$ is a \emph{witness} for $\mathcal{C}$ being large if $\left|\mathcal{C}\cap V_j\right|>\frac{1}{2}l_j$. Observe that having a witness is a necessary condition for any component to be large, hence each large component $\mathcal{C}\in\mathcal{U}$ has at least one witness, yet it is not a sufficient condition. For $j\in\{1,2\}$ we define the set $\mathcal{U}^j\subset\mathcal{U}$ of large components for that type $j$ is a witness and write $\mathcal{U}^j=\left\{U_1^j,\dots,U_{r_j}^j \right\},$ for some integer $r_j\ge 0$. Intuitively, it should not be the case that one of these sets is empty. We prove this by a counting argument.

\begin{claim}\label{notStrange}
$\mathcal{U}^1$ and $\mathcal{U}^2$ are not empty, i.e. $r_1>0$ and $r_2>0.$
\end{claim}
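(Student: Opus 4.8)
The plan is to argue by contradiction via a counting argument, exploiting the concentration of $|S_{i,\mathbf{L}}^a|$ supplied by Lemma~\ref{Concentration} on the event we have assumed to hold. By symmetry it suffices to establish $\mathcal{U}^2\neq\emptyset$ (i.e.\ $r_2>0$); the statement $\mathcal{U}^1\neq\emptyset$ then follows by interchanging the roles of the two types throughout. So suppose towards a contradiction that $\mathcal{U}^2=\emptyset$, that is, no large component of $G(\mathbf{n},P^a)$ has type $2$ as a witness. Then every $\mathcal{C}\in\mathcal{U}$ satisfies $|\mathcal{C}\cap V_2|\le\tfrac12 l_2<l_2$, and hence, by the very definition of a large component (at least $l_1$ vertices of type $1$ or at least $l_2$ vertices of type $2$), each such $\mathcal{C}$ must contain at least $l_1$ vertices of type $1$.

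First I would use this to bound the number of large components. Since distinct components are vertex-disjoint, the sets $\mathcal{C}\cap V_1$ with $\mathcal{C}\in\mathcal{U}$ are pairwise disjoint, each of size at least $l_1$, and their union is exactly $S_{1,\mathbf{L}}^a$; therefore $|\mathcal{U}|\le |S_{1,\mathbf{L}}^a|/l_1=(2+o(1))\varepsilon n_1/l_1=(2+o(1))\log\omega$, using~\eqref{asympL1}. Next I would bound $|S_{2,\mathbf{L}}^a|$ from above by summing the type-$2$ contributions over all large components: $|S_{2,\mathbf{L}}^a|=\sum_{\mathcal{C}\in\mathcal{U}}|\mathcal{C}\cap V_2|\le|\mathcal{U}|\cdot\tfrac12 l_2\le(2+o(1))\log\omega\cdot\tfrac{\varepsilon n_2}{2\log\omega}=(1+o(1))\varepsilon n_2$. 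But Lemma~\ref{Concentration} also gives $|S_{2,\mathbf{L}}^a|=(2+o(1))\varepsilon n_2$, which contradicts the previous bound once $n_2$ is large enough. Hence $\mathcal{U}^2\neq\emptyset$, and symmetrically $\mathcal{U}^1\neq\emptyset$, proving the claim.

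The argument is elementary, and the only place requiring care is the bookkeeping of multiplicative constants. The crucial structural point is that a large component lacking a type-$2$ witness must contain the \emph{full} threshold $l_1$ of type-$1$ vertices rather than merely $\tfrac12 l_1$; this is exactly what opens up the factor-$2$ gap between the derived upper bound $(1+o(1))\varepsilon n_2$ on $|S_{2,\mathbf{L}}^a|$ and its concentration value $(2+o(1))\varepsilon n_2$. I do not anticipate any genuine obstacle beyond making this constant-tracking precise.
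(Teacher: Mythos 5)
Your proposal is correct and is essentially the same counting argument as the paper's: both assume towards contradiction that one witness class is empty (you take $\mathcal{U}^2=\emptyset$, the paper takes $\mathcal{U}^1=\emptyset$, which is the same by the symmetry you invoke), observe that every large component must then contain at least $l_1$ type-$1$ vertices but at most $\tfrac12 l_2$ type-$2$ vertices, and derive incompatible bounds on $|S_{1,\mathbf{L}}^a|$ and $|S_{2,\mathbf{L}}^a|$ from Lemma~\ref{Concentration}. The only cosmetic difference is that you bound $|\mathcal{U}|$ directly rather than carrying the error terms $\zeta_i^a$ through the inequalities.
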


\begin{proof}
Without loss of generality assume towards contradiction that $r_1=0,$ and thus clearly $r_2>0.$ Observe that this implies that 
$$\left|U_\iota^2\cap V_1\right|\leq \frac{1}{2}l_1=\frac{\varepsilon n_1}{2\log \omega},$$ and $$\left|U_\iota^2\cap V_2\right|\geq l_2=\frac{\varepsilon n_2}{\log \omega},
$$for $\iota\in\{1,\dots,r_2\}.$
Counting vertices of both types separately, we therefore get for type $1$ $$2\varepsilon n_1+\zeta_1^a=\left|S_{1,\mathbf{L}}^a\right|=\sum_{\iota=1}^{r_2}\left|U_\iota^2\cap V_1\right|\leq \frac{r_2\varepsilon n_1}{2\log \omega} $$
and for type $2$ 
$$2\varepsilon n_2+\zeta_2^a=\left|S_{2,\mathbf{L}}^a\right|=\sum_{\iota=1}^{r_2}\left|U_\iota^2\cap V_2\right|\geq \frac{r_2\varepsilon n_2}{\log \omega}.$$ This shows $$\left(4+\frac{2\zeta_1^a}{\varepsilon n_1}\right)\log\omega\leq r_2\leq \left(2+\frac{\zeta_2^a}{\varepsilon n_2}\right)\log \omega,$$ a contradiction for large enough $n_1$ and $n_2,$ since $\zeta_i^a\varepsilon^{-1}n_i^{-1}=o(1)$, for $i\in\{1,2\}.$ Hence Claim~\ref{notStrange} holds.
\end{proof}

We continue the proof of Theorem~\ref{mainresult}. Observe that by the definition of witnesses we have  $$\left|U_\iota^j\right|\geq\left|U_\iota^j\cap V_j\right|> \frac{1}{2}l_j=\frac{\varepsilon n_j}{2\log \omega},$$ for $j\in\{1,2\}$ and $\iota\in\{1,\dots,r_j\}.$ Hence, if we estimate the number of vertices of type $j$ by only summing over the components in $\mathcal{U}^j$ we get $$ \frac{r_j\varepsilon n_j}{2\log \omega}<\sum_{\iota=1}^{r_j}\left|U_\iota^j\cap V_j\right|\leq(2+o(1))\varepsilon n_j,$$ and consequently $$r_j\leq 5\log\omega,\text{ for }j\in\{1,2\}.$$ 
Let $U^1\in \mathcal{U}^1$ and $U^2\in \mathcal{U}^2$ be any two large components, i.e.\ they satisfy $\left|U^1\cap V_1\right|\ge \frac{1}{2}l_1=\frac{\varepsilon n_1}{2\log \omega}$ and $\left|U^2\cap V_2\right|\ge \frac{1}{2}l_2=\frac{\varepsilon n_2}{2\log \omega}.$ Then the probability that in $G(\mathbf{n},P^b)$ there is no edge between $U^1$ and $U^2$ is at most 
\begin{align*}
(1-p_{1,2}^b)^{\left|U^1\cap V_1\right|\left|U^2\cap V_2\right|}&\leq \exp\left(-\frac{\alpha\varepsilon }{n_1\log\omega}\cdot\frac{\varepsilon n_1}{2\log\omega}\cdot\frac{\varepsilon n_2}{2\log\omega}\right)\\
&\stackrel{\eqref{omega}}{=}\exp\left(-\frac{\omega} {4\log^3\omega}\right).
\end{align*}
 Taking the union bound for (up to) $r_1+r_2-1$ of these events shows that the probability that in $G(\mathbf{n},P)$ all components that were large in $G(\mathbf{n},P^a)$ are connected is at least $$1-(r_1+r_2-1)\exp\left(-\frac{\omega} {4\log^3\omega}\right)\geq 1-10\log \omega\exp\left(-\frac{\omega} {4\log^3\omega}\right)=1-o(1).$$
Thus, $whp$ there is a component $\mathcal{C}^*$ in $G(\mathbf{n},P^a)\cup G(\mathbf{n},P^b)=G(\mathbf{n},P)$ which  contains $S_{i,\mathbf{L}}^a,$ for $i\in\{1,2\}$. 

 On the other hand, writing $S_{i,\mathbf{L}}=S_{i,\mathbf{L}}\left(G(\mathbf{n},P)\right)$ for the set of vertices of type $i\in\{1,2\}$ in large components of $G(\mathbf{n},P)$ we get $(\mathcal{C}^*\cap V_i)\subset S_{i,\mathbf{L}}$ due to the coupling. Hence we have 
$$
S_{i,\mathbf{L}}^a\subset (\mathcal{C}^*\cap V_i)\subset S_{i,\mathbf{L}}.
$$
Furthermore, applying Lemma~\ref{Concentration}, with the same choice of $\mathbf{L},$ directly to $G(\mathbf{n},P)$ we obtain $whp$
$$\left|S_{i,\mathbf{L}}\right|=2\varepsilon n_i +\zeta_i$$
for some real function $\zeta_i=o(\varepsilon n_i).$
Thus the number of vertices of type $i$ in the component $\mathcal{C}^*$ satisfies
\begin{equation}\label{sizeC}
\Big|\left|\mathcal{C}^*\cap V_i\right|-2\varepsilon n_i\Big|\le \left|\zeta_i^a\right|+\left|\zeta_i\right|=o(\varepsilon n_i). 
\end{equation}

Moreover, any other large component $\mathcal{C}$ in $G(\mathbf{n},P)$ may at most contain all the vertices from $S_{1,\mathbf{L}}\setminus S_{1,\mathbf{L}}^a$ and $S_{2,\mathbf{L}}\setminus S_{2,\mathbf{L}}^a,$ and therefore satisfies
$$
\left|\mathcal{C}\cap V_i\right|\le \left|S_{i,\mathbf{L}}\right|-\left|S_{i,\mathbf{L}}^a\right|\le \left|\zeta_i\right|+\left|\zeta_i^a\right|=o(\varepsilon n_i), \text{ for } i\in\{1,2\}.
$$
In particular, summing over both types, we have 
$$
\left|\mathcal{C}\right|\le \left|\mathcal{C}^*\right|,
$$
for large enough $n_1$ and $n_2\,.$
Consequently, $\mathcal{C}^*$ is already the largest component $\mathcal{L}_1\left(G(\mathbf{n},P)\right)$ and satisfies the required asymptotics by~\eqref{sizeC}, completing the proof.
\end{proof}

\section{Weakly subcritical regime: proof of Theorem~\ref{mainresult2}}\label{sec:weaklysub}

Most of the work for this regime has already been done in Section~\ref{dualSection}, since the dual process of a weakly supercritical branching process is weakly subcritical. Therefore we will keep the proof short. 

\begin{proof}[Proof of Theorem~\ref{mainresult2}]
Let the conditions be as in Theorem~\ref{mainresult2}. Then, analogously to the proof of Lemma~\ref{dualProcess}, we calculate the expected total size of the $2$-type binomial branching process $\mathcal{T}_{\mathbf{n},P}^i$, for $i\in\{1,2\}$, and get
\[\E\left(\left|\mathcal{T}_{\mathbf{n},P}^i\right|\right)=\frac{1+\mu_{i,3-i}-\mu_{3-i,3-i}}{1-\left(\mu_{1,1}+\mu_{2,2}-\mu_{1,1}\mu_{2,2}+\mu_{1,2}\mu_{2,1}\right)}\sim \varepsilon^{-1}.\] 
Now let $L=\delta n^{2/3},$ for any fixed constant $\delta>0$ and write $S_L$ for the set of vertices in components of size at least $L$ and $s_l=\left|S_L\right|$. Then with the coupling as in Lemma~\ref{coupling}(i) we get, by applying Markov's inequality twice and linearity of expectation, 
\begin{align*}
\Prob\left(s_L\geq L\right)&\leq L^{-1}\E\left(s_L\right)\\&\leq L^{-1}\left(\sum_{v\in V_1}\Prob\left(\left|\mathcal{C}_v\right|\geq L\right)+\sum_{v\in V_2}\Prob\left(\left|\mathcal{C}_v\right|\geq L\right)\right)\\&\leq L^{-1}\left( n_1\Prob\left(\left|\mathcal{T}_{\mathbf{n},P}^1\right|\geq L\right)+n_2\Prob\left(\left|\mathcal{T}_{\mathbf{n},P}^2\right|\geq L\right)\right)\\&\leq \varepsilon^{-1}L^{-2}n=(\delta^{2}\varepsilon n^{1/3})^{-1}\to 0,
\end{align*} since $\varepsilon^3 n\to\infty$ by Condition~\eqref{asymptoticalConditionSub}. Hence, since $\delta>0$ was arbitrary, $whp$ all components are of size $o(n^{2/3})$.
\end{proof}

\begin{remark}
 This result can be slightly strengthened: Let $\omega=\varepsilon^3n\to \infty$ and replace $L$ by $\hat{L}=\delta n^{2/3}\omega^{-1/6+c}$ for any $0<c<1/6.$  
\end{remark}

\section{Supercritical regime}\label{sec:supercritical}

In the supercritical regime, when the distance from the critical point is a constant, $G(\mathbf{n},P)$ $whp$ has a giant component. The proof is essentially the same as in Section~\ref{proof} except for some of the arguments used for calculating the survival probabilities. 

\begin{theorem}\label{mainresult3}
For $n_1\in\N$ and $n_2\in \N$ with $n_1\geq n_2$, let $n=n_1+n_2$ and let $\varepsilon>0$ be a fixed constant. Furthermore, let $$P=\left(p_{i,j}\right)_{i,j\in \{1,2\}}\in(0,1]^{2\times 2}$$ be a symmetric matrix of probabilities satisfying the following conditions:
 \begin{equation}\label{asymptoticalConditionConst}
n_2 \mu_{2,1}\to \infty,
\end{equation}
\begin{equation}
\mu_{\iota,1}+\mu_{\iota,2}=1+\varepsilon+o(1), \text{ for any }\iota\in\{1,2\}, \label{sumExpectationConst}
\end{equation}
where $\mu_{i,j}=p_{i,j}n_j$ for every pair $(i,j)\in \{1,2\}^2$. Let $\rho_\varepsilon$ be the unique positive solution of the equation $$1-\rho_{\varepsilon}-\exp\left(-(1+\varepsilon)\rho_\varepsilon\right)=0.$$
Then, whp the following holds for every integer $r\ge 2$ and $i\in\{1,2\}\!:$ 
\[|\mathcal{L}_1\left(G(\mathbf{n},P)\right)\cap V_i|=\left(\rho_{\varepsilon}+o(1)\right)n_i\quad\text{ and } \quad|\mathcal{L}_r\left(G(\mathbf{n},P)\right)\cap V_i|=o(n_i). \] 
Therefore, in particular, 
 \[L_1\left(G(\mathbf{n},P)\right)=(\rho_{\varepsilon}+o(1)) n\quad\text{ and } \quad L_r\left(G(\mathbf{n},P)\right)=o(n).\]
\end{theorem}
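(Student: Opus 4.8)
The plan is to run the argument of Section~\ref{proof} essentially verbatim, with every occurrence of the vanishing quantity $2\varepsilon$ replaced by the fixed constant $\rho_\varepsilon$, and every estimate of the form $o(\varepsilon)$ or $O(\varepsilon^{-1})$ upgraded to $o(1)$ or $O(1)$. First I would re-establish the branching-process toolbox of Section~\ref{branchingSection} for a constant distance to criticality: (a) the survival probabilities of $\mathcal{T}_{\mathbf{n},P}^i$ satisfy $\rho_1,\rho_2\to\rho_\varepsilon$; (b) the dual process $\mathcal{T}_{\mathbf{n},\Pi}$ (constructed exactly as in Subsection~\ref{dualSection}, which only uses $p_{i,j}=O(n_j^{-1})$, guaranteed here by~\eqref{sumExpectationConst}) is subcritical with Perron--Frobenius eigenvalue $(1-\rho_\varepsilon)(1+\varepsilon)<1$ bounded away from~$1$, so that $\E\big(|\mathcal{T}_{\mathbf{n},\Pi}^i\cap V_j|\big)=O(1)$ for $i,j\in\{1,2\}$; and (c) for every $m=m(n_1,n_2)\to\infty$ one has $\Prob\big(\{w(\mathcal{T}_{\mathbf{n},P})\ge m\}\cap\mathcal{D}\big)=o(1)$, by the one-line computation of Lemma~\ref{largeWidth} using $\Prob(\mathcal{D}\mid\mathcal{W}_m)\le\exp(-(1-o(1))\rho_\varepsilon m)$. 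The coupling of Lemma~\ref{coupling} is already stated for arbitrary $\mathbf{n}$ and $P$ and needs no change.

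The only genuinely new ingredient is~(a), which is why Lemma~\ref{asympSurvival} cannot be quoted directly: once $\varepsilon$ is a constant the identity $\varepsilon_i=\tfrac12\rho_i^*+O((\rho_i^*)^2)$ no longer pins down $\rho_i^*$. Instead I would repeat the first part of the proof of Lemma~\ref{asympSurvival}: by the subsubsequence principle assume $\rho_1\ge\rho_2$, and sandwich the fixed-point equations~\eqref{PGF2} between the functions $f_i(\rho)=1-\rho-\exp(-(1+\varepsilon_i)\rho)$ with $\varepsilon_i=\mu_{i,1}+\mu_{i,2}-1+O(n_2^{-1})$; here the error term $O(n_2^{-1})$ inside the exponential of~\eqref{PGF2} is harmless precisely because~\eqref{sumExpectationConst} forces $\mu_{i,j}\le 1+\varepsilon+o(1)=O(1)$ and $\rho_i\le 1$. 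By~\eqref{sumExpectationConst} we have $\varepsilon_i\to\varepsilon$, and since the unique positive zero $\rho_i^*$ of $f_i$ depends continuously and monotonically on $\varepsilon_i$ and equals $\rho_\varepsilon$ at $\varepsilon_i=\varepsilon$, the same contradiction argument as in Lemma~\ref{asympSurvival} gives $\rho_2^*\le\rho_2\le\rho_1\le\rho_1^*$ and hence $\rho_1,\rho_2\to\rho_\varepsilon$. For~(b) the determinant computation of Lemma~\ref{dualProcess} goes through with $\varepsilon$ replaced by the positive constant $\varepsilon':=1-(1-\rho_\varepsilon)(1+\varepsilon)$, yielding $d\ge c\,h_{i,3-i}>0$ for a constant $c>0$ and hence the $O(1)$ bounds. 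I expect this step --- cleanly upgrading ``$\rho_1\sim\rho_2$'' to ``$\rho_1,\rho_2\to\rho_\varepsilon$'' while tracking the $o(1)$ errors --- to be the only real obstacle; everything downstream is bookkeeping.

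With the toolbox in place, the remainder transfers line by line. Fix $l_j=l_j(n_1,n_2)$ with $l_j\to\infty$ and $l_j=o(n_j)$ (for instance $l_j=n_j/\log\omega$, with $\omega$ as in~\eqref{omega}). Then the analogues of Lemmas~\ref{ExpectationUpperBound} and~\ref{ExpectationLowerBound} read $\E(s_{i,\mathbf{L}})=(\rho_\varepsilon+o(1))n_i$: the upper bound follows from~(b) via
\[
\Prob\big(|\mathcal{T}_{\mathbf{n},P}^i\cap V_1|\ge l_1\vee|\mathcal{T}_{\mathbf{n},P}^i\cap V_2|\ge l_2\big)\le\rho_i+l_1^{-1}O(1)+l_2^{-1}O(1)=\rho_\varepsilon+o(1),
\]
and the lower bound from Lemma~\ref{coupling}(ii) applied with $\mathbf{m}=\mathbf{L}=o(\mathbf{n})$, noting that the modified process $\mathcal{T}_{\mathbf{n}-\mathbf{m},P}^i$ still satisfies~\eqref{asymptoticalConditionConst} and~\eqref{sumExpectationConst} and survives with probability $\rho_\varepsilon+o(1)$. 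The second-moment step copies the proof of Lemma~\ref{Concentration}: explore $\mathcal{C}_v$ stopping at $l_j$ vertices of type $j$ or at $\varepsilon l_2$ boundary vertices, bound $\Prob(\mathcal{A})\le\rho_\varepsilon+o(1)$ using~(b) and the width estimate~(c) (applied with $m=\varepsilon l_2/2\to\infty$), and bound $\E[s_{i,\mathbf{L}}\mid\mathcal{A}]\le l_i+(n_i-l_i)(\rho_\varepsilon+o(1))=(\rho_\varepsilon+o(1))n_i$, where the coefficients $l_1^{-1}$, $l_2^{-1}$ and $\varepsilon l_2 n_2^{-1}$ are $o(1)$ by the choice of $\mathbf{L}$. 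This gives $\E(s_{i,\mathbf{L}}^2)\le(\rho_\varepsilon^2+o(1))n_i^2\sim\E(s_{i,\mathbf{L}})^2$, so Chebyshev yields $s_{i,\mathbf{L}}=(\rho_\varepsilon+o(1))n_i$ whp, for $i\in\{1,2\}$.

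Finally, the sprinkling argument is exactly that in the proof of Theorem~\ref{mainresult}. With $\alpha=\min\{1,\varepsilon^{-1}\mu_{2,1}\}$ and $\omega=\alpha\varepsilon^3n_2$ one still has $\omega\to\infty$, since $n_2\mu_{2,1}\to\infty$ by~\eqref{asymptoticalConditionConst} and $n_2\to\infty$; the same matrices $P^a$, $P^b$ with $p_{1,2}^b=\alpha\varepsilon/(n_1\log\omega)=\min\{\varepsilon/(n_1\log\omega),\,p_{1,2}/\log\omega\}$ give a coupling $G(\mathbf{n},P^a)\cup G(\mathbf{n},P^b)=G(\mathbf{n},P)$ with $G(\mathbf{n},P^a)$ again meeting the hypotheses; the counting argument of Claim~\ref{notStrange} (with $2\varepsilon$ replaced by $\rho_\varepsilon$) shows that large components with each of the two witness types exist; the number of large components of each witness type is $O(\log\omega)$; and the probability that two fixed large components of opposite witness type receive no $P^b$-edge is at most $\exp\!\big(-\Theta(\omega\log^{-3}\omega)\big)$, so a union bound over $O(\log\omega)$ pairs connects all of them whp. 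Hence whp there is a component $\mathcal{C}^*$ with $S_{i,\mathbf{L}}^a\subset(\mathcal{C}^*\cap V_i)\subset S_{i,\mathbf{L}}$, so $|\mathcal{C}^*\cap V_i|=(\rho_\varepsilon+o(1))n_i$, while every other component contains at most $|S_{i,\mathbf{L}}|-|S_{i,\mathbf{L}}^a|=o(n_i)$ vertices of type $i$; summing over $i$ gives the claimed statements for $L_1$ and $L_r$.
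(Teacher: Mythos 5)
Your proposal is correct and follows essentially the same route as the paper's proof: the key new ingredient is establishing $\rho_1,\rho_2\to\rho_\varepsilon$ via continuity of the zero of $f(\cdot,\rho)=1-\rho-\exp(-(1+\cdot)\rho)$ in the first argument (the paper makes this precise with the implicit function theorem, while you invoke continuity and monotonicity of the root, which amounts to the same thing), after which everything in Sections~\ref{dualSection}--\ref{proof} transfers with $2\varepsilon$ replaced by $\rho_\varepsilon$ and $O(\varepsilon^{-1})$ replaced by $O(1)$. The paper's proof states this transfer without spelling out the bookkeeping; your version is a more explicit account of the same argument.
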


\begin{proof}
Let the conditions be as in Theorem~\ref{mainresult3}. We will only show the computation for the survival probabilities, which is very similar to the proof of Lemma~\ref{asympSurvival}. For the $F_i$'s defined in~\eqref{PGF2} we use the same bounding functions as before.

As in the proof of Lemma~\ref{asympSurvival} we assume without loss of generality that $\rho_1\ge \rho_2$ and thus we have $F_2(\rho_1,\rho_2)< F_2(\rho_2,\rho_2)$ and $F_1(\rho_1,\rho_2)> F_1(\rho_1,\rho_1)$. Fix $i\in\{1,2\}.$ We consider the bounding functions $F_i(\rho_i,\rho_i)$: 
 \begin{align*}
F_i(\rho_i,\rho_i)&=1-\rho_i-\left(1-\frac{\mu_{i,1}\rho_i}{n_1}\right)^{n_1}\left(1-\frac{\mu_{i,2}\rho_i}{n_2}\right)^{n_2}\\&= 1-\rho_i-\exp\left(-(\mu_{i,1}+\mu_{i,2})\rho_i-O\left(\frac{\mu_{i,1}^2\rho_i^2}{n_1}+\frac{\mu_{i,2}^2\rho_i^2}{n_2}\right)\right),
\end{align*}
by the Taylor-expansion of the natural logarithm around $1$. Since $\mu_{i,1}\leq 1+2\varepsilon$ and $\mu_{i,2}\leq 1+2\varepsilon,$ by the conditions of Theorem~\ref{mainresult3} and the fact that $\rho_i\leq 1$ (since it is a probability), we have \begin{align*}
F_i(\rho_i,\rho_i)&=1-\rho_i-\exp\left(-\left[\mu_{i,1}+\mu_{i,2} + O\left(n_2^{-1}\right)\right]\rho_i\right)\\&=1-\rho_i-\exp\left(-(1+\varepsilon_i)\rho_i\right),
\end{align*}
where $\varepsilon_i=\mu_{i,1}+\mu_{i,2}-1+O\left(n_2^{-1}\right)\sim\varepsilon$, by Conditions~\eqref{asymptoticalConditionConst}~and~\eqref{sumExpectationConst}. We set $D=\R_{>0}\times (0,1)$ and a real function $f$ on $D$ by setting 
$$f(x,\rho)=1-\rho-\exp(-x\rho),$$ for $(x,\rho)\in D.$ Note that we have $F_i(\rho_i,\rho_i)=f(\varepsilon_i,\rho_i).$

 It is well-known that $f(x,\rho)=0$ has exactly one solution for any fixed $x>0.$ Furthermore note that the partial derivative with respect to the variable $c$ of $f$ does not vanish on $D$, therefore we can apply the classical implicit function theorem in $\R^2.$ We consider $x=\varepsilon$ and denote by $(\varepsilon,\rho_\varepsilon)$ the corresponding solution of $f=0.$ Hence, there is an open set $U$ with $\varepsilon\in U$ and an open set $V$ with $\rho_\varepsilon\in V$ such that $$\left\{(u,g(u))\mid u \in U\right\}=\left\{(u,v)\in U\times V\mid f(u,v)=0\right\},$$ where $g$ is a continuous function on $U$ with $\rho_\varepsilon=g(\varepsilon)$. Let $i\in\{1,2\}.$ Because $\left|\varepsilon_i-\varepsilon\right|=o(1),$ we know that $\varepsilon_i\in U$ for large enough $n_1$ and $n_2,$ and this implies that $f(\varepsilon_i,g(\varepsilon_i))=0.$ Since $g$ is continuous we have 
$$
g(\varepsilon_1)\sim g(\varepsilon_2)\sim \rho_\varepsilon, 
$$
and it is sufficient to show that $\rho_1\le g(\varepsilon_1)$ and $\rho_2\ge g(\varepsilon_2)$.

For this last step, assume towards contradiction that $\rho_1>g(\varepsilon_1)$ and observe that 
$$
f(\varepsilon_1,\rho)<0, \forall\rho\in (g(\varepsilon_1),1].
$$
Since $(\rho_1,\rho_2)$ is by definition a solution of~\eqref{PGF} we have
$$0=F_1(\rho_1,\rho_2)\leq F_1(\rho_1,\rho_1)=f(\varepsilon_1,\rho_1)<0,$$ 
a contradiction. Analogously $\rho_2<g(\varepsilon_2)$ leads to a contradiction since 
$$
f(\varepsilon_2,\rho)>0, \forall\rho\in (0,g(\varepsilon_2)).
$$ Thus we have 
\begin{equation}\label{survivalProbConst}
\rho_1\sim\rho_2\sim \rho_\varepsilon.
\end{equation}

The remainder of the proof follows the lines of the proof of Theorem~\ref{mainresult} in Sections~\ref{dualSection},~\ref{widthSection}~and~\ref{proof}, by replacing $\rho_1\sim\rho_2\sim 2\varepsilon$ with statement~\eqref{survivalProbConst}.
\end{proof}

\section{Subcritical regime}\label{sec:subcritical}

In the subcritical regime, where the distance to the critical point is a constant, one can obtain a strong upper bound on the size of all components by a standard application of large deviation inequalities.

\begin{theorem}\label{mainresult4}
For $n_1\in\N$ and $n_2\in \N$ with $n_1\geq n_2$, let $n=n_1+n_2$ and let $1>\varepsilon>0$ be a fixed constant. Furthermore, let $$P=\left(p_{i,j}\right)_{i,j\in \{1,2\}}\in(0,1]^{2\times 2}$$ be a symmetric matrix of probabilities satisfying the following conditions:
\begin{equation}\label{sumExpectationSubConst}
\mu_{\iota,1}+\mu_{\iota,2}=1-\varepsilon+o(1), \text{ for any }\iota\in\{1,2\}, 
\end{equation}where $\mu_{i,j}=p_{i,j}n_j$ for every pair $(i,j)\in \{1,2\}^2$.
Then we have whp
\begin{equation*}
L_1\left(G(\mathbf{n},P)\right)=O(\log n).
\end{equation*}
\end{theorem}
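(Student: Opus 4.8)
The plan is to bound the size of every component from above by the total progeny of the associated subcritical $2$-type binomial branching process and then take a union bound over all vertices. Concretely, by Lemma~\ref{coupling}(i) we may couple so that $|\mathcal{C}_v|\le|\mathcal{T}_{\mathbf{n},P}^i|$ for every vertex $v$ of type $i\in\{1,2\}$; hence it suffices to show that there are a constant $c=c(\varepsilon)>0$ and an integer $n_0$ with
\[
\Prob\left(\left|\mathcal{T}_{\mathbf{n},P}^i\right|\ge t\right)\le e^{-ct}\qquad\text{for all }t\ge 1,\ i\in\{1,2\},\ n_1\ge n_2\ge n_0.
\]
Given this, choosing $K$ large enough (any $K>1/c$ works) and summing over the $n$ vertices yields $\Prob(L_1(G(\mathbf{n},P))\ge K\log n)\le n\cdot e^{-cK\log n}=n^{1-cK}\to 0$, which is the assertion.

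To prove the exponential tail bound I would explore $\mathcal{T}_{\mathbf{n},P}^i$ with a queue (breadth-first): pop a vertex, reveal and enqueue its children, and let $\xi_s$ be the number of children of the $s$-th popped vertex, with $\mathcal{F}_{s-1}$ the history before that pop. If the $s$-th popped vertex has type $\iota$, then $\xi_s$ is, conditionally on $\mathcal{F}_{s-1}$, the sum of two independent binomials $\mathrm{Bin}(n_1,p_{\iota,1})$ and $\mathrm{Bin}(n_2,p_{\iota,2})$, so $\E[\xi_s\mid\mathcal{F}_{s-1}]=\mu_{\iota,1}+\mu_{\iota,2}=1-\varepsilon+o(1)\le 1-\varepsilon/2$ for $n_1\ge n_2\ge n_0$, \emph{uniformly in the type} by Condition~\eqref{sumExpectationSubConst}. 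Since $\{|\mathcal{T}_{\mathbf{n},P}^i|\ge t\}$ forces the queue to remain non-empty through the first $t-1$ pops, i.e.\ $\sum_{s=1}^{t-1}(\xi_s-1)\ge 0$, we get $\Prob(|\mathcal{T}_{\mathbf{n},P}^i|\ge t)\le\Prob(\sum_{s=1}^{t-1}\xi_s\ge t-1)$.

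For the latter I would use exponential moments: from $\E[e^{\theta\,\mathrm{Bin}(n,p)}]=(1+p(e^\theta-1))^n\le e^{np(e^\theta-1)}$ and the uniform mean bound we obtain $\E[e^{\theta\xi_s}\mid\mathcal{F}_{s-1}]\le e^{(1-\varepsilon/2)(e^\theta-1)}$, and iterating the conditioning together with Markov's inequality gives $\Prob(\sum_{s=1}^{t-1}\xi_s\ge t-1)\le\exp(-(t-1)[\theta-(1-\varepsilon/2)(e^\theta-1)])$. As $\theta-(1-\varepsilon/2)(e^\theta-1)=\tfrac{\varepsilon}{2}\theta-O(\theta^2)$ near $0$, fixing a small $\theta=\theta(\varepsilon)>0$ makes the bracket a positive constant, which yields the tail bound with some $c=c(\varepsilon)>0$ after absorbing the shift by $1$.

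I do not expect a genuine obstacle here; the only mildly delicate point is that all estimates must be uniform in $n_1,n_2$ and in the (random, history-dependent) type of the vertex currently being explored. Both are handled simultaneously, since Condition~\eqref{sumExpectationSubConst} supplies the mean bound $1-\varepsilon+o(1)$ for \emph{both} types $\iota$, and the exponential-moment estimate for a binomial $\mathrm{Bin}(n_j,p_{\iota,j})$ depends on $n_j$ only through the bounded quantity $\mu_{\iota,j}=p_{\iota,j}n_j\le 1$. The rest is routine and parallels the branching-process arguments already used for Theorem~\ref{mainresult}, now requiring only the upper coupling.
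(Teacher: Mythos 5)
Your proposal is correct and takes essentially the same approach as the paper: both couple $\mathcal{C}_v$ to $\mathcal{T}_{\mathbf{n},P}^i$ via Lemma~\ref{coupling}(i), observe that $|\mathcal{T}_{\mathbf{n},P}^i|\ge L$ forces the total offspring count over the first $O(L)$ breadth-first vertices to be at least $L-1$, and then bound that count by a Chernoff-type exponential-moment argument before applying a union bound over all $n$ starting vertices. The paper reaches the same conclusion by citing a standard Chernoff bound for the padded sum $X_{v,L}^*$; your filtration-based version is a slightly more explicit rendering of the same estimate.
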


\begin{proof}
Let the conditions be as in Theorem~\ref{mainresult4}. We fix a vertex $v$ and explore its component $\mathcal{C}_v$ in $G(\mathbf{n},P)$. Denote the resulting spanning tree by $\mathcal{T}_v$ and couple this process with a $2$-type branching process $\mathcal{T}_{\mathbf{n},P}$ as in Lemma~\ref{coupling}(i) such that $\mathcal{T}_v\subset\mathcal{T}_{\mathbf{n},P}.$ Let $\mathcal{S}_L$ be the event that $G(\mathbf{n},P)$ contains a component of size at least $L$ for some appropriately chosen real function $L.$ We want to show that $$\Prob\left(\mathcal{S}_L\right)=o(1).$$

Let us denote the (possibly infinite) sequence of vertices born in $\mathcal{T}_{\mathbf{n},P}\,$, with respect to the breadth-first exploration, by $\sigma=(v_1,v_2,v_3,\dots),$ where $v_1=v.$ 

For any vertex $u\in V_1\cup V_2$ let $X_u$ be the random variable that counts the number of children of $u$ and has a distribution $Bin(n_1,p_{j,1})+Bin(n_2,p_{j,2}),$ where $j\in\{1,2\}$ is the type of $u.$ Then consider the random variables 
\begin{equation*}
X_{v,L}:=\sum_{r=1}^{\min\{L,\left|\sigma\right|\}}X_{v_r}\leq\sum_{r=1}^{L}X_{v_r}=:X_{v,L}^* ,
\end{equation*} 
where $\left\{v_{\left|\sigma\right|+1},\dots,v_{L}\right\}$ is an arbitrary sequence of distinct additional vertices. Notice that $X_{v,L}^*$ is a sum of independent Bernoulli random variables whose expectation satisfies  
\begin{equation}\label{lastEstimate}
\left|\E\left(X_{v,L}^*\right)-L(1-\varepsilon)\right|\leq\gamma,
\end{equation} for some $\gamma=\gamma(n)=o(L),$ by Condition~\eqref{sumExpectationSubConst}. Hence, by application of a Chernoff bound (e.g.~\cite{JansonLuczakRucinskiBook}, page 29) we get
\begin{align}
\nonumber\Prob\left(X_{v,L}^*\geq L-1\right)&\stackrel{\eqref{lastEstimate}}{\leq}\Prob\left(X_{v,L}^*\geq\E\left(X_{v,L}^*\right) + \varepsilon L -1 -\gamma \right)
\\\nonumber&\stackrel{\eqref{lastEstimate}}{\leq} \exp\left(-\frac{(\varepsilon L-1-\gamma)^2}{2\left(L\left(1-\varepsilon\right)+\gamma+1/3\left(\varepsilon L-1-\gamma\right)\right)}\right)
\\&\stackrel{\gamma=o(L)}{\leq}\exp\left(-\frac{\varepsilon^2}{2-\frac{4\varepsilon}{3}}L(1+o(1))\right),\label{lastlastEstimate}
\end{align}  
uniformly for all vertices $v\in V_1\cup V_2$.

 In order to complete the proof we observe that the event $\left|\mathcal{T}_{\mathbf{n},P}\right|\geq L$ implies the event $X_{v,L}\geq L-1$ and therefore we get by application of the union bound
\begin{align*}
\Prob\left(\mathcal{S}_L\right)&\leq \sum_{v\in V_1\cup V_2}\Prob\left(\left|\mathcal{C}_v\right|\geq L\right)\leq \sum_{v\in V_1\cup V_2}\Prob\left(\left|\mathcal{T}_{\mathbf{n},P}\right|\geq L\right)\\
&\leq \sum_{v\in V_1\cup V_2}\Prob\left(X_{v,L}\geq L-1\right)
\leq \sum_{v\in V_1\cup V_2}\Prob\left(X_{v,L}^*\geq L-1\right)\\
&\stackrel{\eqref{lastlastEstimate}}{\leq}\exp\left(\log n-\frac{\varepsilon^2}{2-4\varepsilon/3}L(1+o(1))\right)=o(1),
\end{align*} 
for any $L>3\varepsilon^{-2}\log n,$ completing the proof.
\end{proof}

\section{Discussion}

In the previous sections we showed that the emergence of the giant component in the $2$-type random graph $G(\mathbf{n},P)$ is very similar to the behaviour of the binomial random graph $G(n,p)$, at least when each row of the expectation matrix is scaled similarly. In theory one therefore  could study $G_k(\mathbf{n},P),$ the $k$-type version of $G(\mathbf{n},P),$ assuming that each row of the expectation matrix sums up to approximately $1+\varepsilon$. It is to be expected that in this case we would have $\rho_1\sim\dots\sim\rho_k\sim 2\varepsilon$ and thus also a unique largest component of size $L_1\left(G_k(\mathbf{n},P)\right)\sim2\varepsilon n.$ Proving this for all $k\ge 3$ would be cumbersome at best, since for instance in our proof the bound on the total expected number of offspring of the dual process relies on explicitly calculating a set of generating functions.

Therefore let us take another perspective: Imposing the row-sum conditions ensures that the Perron-Frobenius eigenvalue of the offspring expectation matrix $M$ is roughly $1+\varepsilon$, however it also implies that the corresponding normalised left-eigenvector is not necessarily equal but close to $k^{-1}(1,\dots,1)$. In this spirit we could consider $G_k(\mathbf{n},P)$ for offspring expectation matrices $M$ whose Perron-Frobenius eigenvalue is $1+\varepsilon$ with the corresponding normalised positive left-eigenvalue $\mathbf{v}$ and study how the survival probabilities behave asymptotically. The Perron-Frobenius theory (c.f.~Chapter~V.6 in~\cite{AthreyaNeyBook}) provides a heuristic for this since the properly rescaled offspring vector of generation $t$ of the corresponding branching process converges almost surely to $\mathbf{v}$ as $t\to\infty,$ under the assumption that it survives.
Thus it would be interesting to know whether in this case it is true that $(\rho_1,\dots,\rho_k)\sim \beta_\varepsilon \mathbf{v}$ for some real function $\beta_\varepsilon=o(1)$. 

\section*{Acknowledgement}

We would like to thank Oliver Riordan for helpful discussions and valuable remarks. We are also very grateful to the referee for her/his useful suggestions.

\printbibliography

\end{document}